\def\ffi{\varphi}
\def\eps{\varepsilon}
\def\dst{\displaystyle}
\renewcommand{\Im}{\mathrm{Im}\,}
\renewcommand{\Re}{\mathrm{Re}\,}
\DeclareMathOperator{\dist}{dist}
\def\C{{\mathbb{C}}}
\def\N{{\mathbb{N}}}
\def\R{{\mathbb{R}}}
\def\Z{{\mathbb{Z}}}
\def\d{\,{\mathrm{d}}}
\newcommand{\norm}[1]{{\left\|{#1}\right\|}}
\newcommand{\ent}[1]{{\left[{#1}\right]}}
\newcommand{\abs}[1]{{\left|{#1}\right|}}
\newcommand{\nc}{\newcommand}
\nc{\lra}{\longrightarrow}
\newtheorem{lemma}{Lemma}[section]
\newtheorem{proposition}[lemma]{Proposition}
\newtheorem{theorem}[lemma]{Theorem}
\newtheorem{corollary}[lemma]{Corollary}
\theoremstyle{definition}
\newtheorem{definition}[lemma]{Definition}
\theoremstyle{remark}
\newtheorem{remark}[lemma]{Remark}
\title{Quantitative estimates of sampling constants in model spaces}
\author{A. Hartmann, P. Jaming \& K. Kellay}
\subjclass[2010]{41A10;42C15 }
\keywords{Model space,  Bernstein inequalities, sampling, reverse Carleson measure}
\thanks{The authors kindly acknowledge financial support from the French ANR program, ANR-12-BS01-0001 (Aventures),
the Austrian-French AMADEUS project 35598VB - Char\-ge\-Disq, the French-Tunisian CMCU/UTIQUE project 32701UB Popart, the Joint French-Russian Research Project PRC CNRS/RFBR 2017-2019.\\
This study has been carried out with financial support from the French State, managed
by the French National Research Agency (ANR) in the frame of 
the Investments for
the Future Program IdEx Bordeaux - CPU (ANR-10-IDEX-03-02).  }
\address{Address: Univ. Bordeaux, IMB, UMR 5251, F-33400 Talence, France.
CNRS, IMB, UMR 5251, F-33400 Talence, France.}  
\email{Andreas.Hartmann@math.u-bordeaux.fr}
\email{Philippe.Jaming@math.u-bordeaux.fr}
\email{Karim.Kellay@math.u-bordeaux.fr}
\begin{document}

\begin{abstract} 
We establish quantitative estimates for sampling (dominating) 
sets in model spaces
associated with meromorphic inner functions, {\it i.e.\ }those 
corresponding to de Branges spaces.
Our results encompass the Logvinenko-Sereda-Panejah (LSP) Theorem including Kovrijkine's
optimal sampling constants for Paley-Wiener spaces. 
It also extends Dyakonov's LSP theorem for model spaces associated with bounded
derivative inner functions.
Considering meromorphic inner functions allows us to introduce a new geometric density condition,
in terms of which the sampling sets are completely characterized.
This, in comparison to Volberg's characterization of sampling measures in terms of harmonic measure, 
enables us to obtain explicit estimates on the sampling constants.
The methods combine Baranov-Bernstein inequalities, reverse Carleson measures and
Remez inequalities.
\end{abstract}

\subjclass[2000]{46E22,42C15,30J05}

\maketitle

\section{Introduction}

An important question in signal theory is to know how much information
of a signal is needed in order to be recovered exactly. This information can be given 
in discrete form (points giving rise to so-called sampling sequences) 
or more generally by subsets of
the sets on which the signal is defined. A prominent class of signals is given by the Paley-Wiener
space $PW^2_{\sigma}$ of entire functions of type $\sigma>0$ and which are square 
integrable on the
real line (finite energy). By the Paley-Wiener theorem it is known that this is exactly the
space of Fourier transforms of functions in $L^2(-\sigma,\sigma)$ (finite energy signals
on $(-\sigma,\sigma)$). For the Paley-Wiener space, it is known (Shannon-Whittaker-Kotelnikov
Theorem) that $\dst\frac{\pi}{\sigma}\Z$ forms a sampling sequence for $PW^2_{\sigma}$, which means
that each function can be exactly resconstructed from its values on $\dst\frac{\pi}{\sigma}\Z$ with an
appropriate control of norm (actually every sequence $\gamma\Z$ with $\gamma\le\pi/\sigma$ is
sampling for $PW_{\sigma}^2$). 
More general sequences can be considered; we refer to the seminal paper \cite{HNP} 
which provides criteria, yet difficult to check, involving the famous Muckenhoupt condition.

Panejah \cite{Pa1,Pa2}, Kac'nelson \cite{Kac} and Logvinenko-Sereda \cite{LS}
were interested in a characterization of subsets $\Gamma\subset \R$ allowing
to recover Paley-Wiener functions, or more precisely their norm, 
from their restriction to $\Gamma$. In other words, they were looking for sets
$\Gamma$ for which there exists a constant $C=C(\Gamma)$ such that, for every $f\in PW_{\sigma}^2$,
\begin{equation}\label{eq:DefSamp}
\int_\R|f(x)|^2\d x\leq C\int_\Gamma|f(x)|^2\d x.
\end{equation}
Such sets $\Gamma$  are said to be {\em dominating} and the least constant appearing in 
\eqref{eq:DefSamp} is called the {\em sampling constant} of $\Gamma$. 
In view of the result on sampling sequences, it is not too
surprising that in order to be dominating in the Paley-Wiener space,
$\Gamma$ has to satisfy a so-called relative density condition. This
condition means that each interval $I$ of a given length $a$ ($a$ large enough) contains a
minimal proportion of $\Gamma$, 
$$
 |\Gamma\cap I|\geq \gamma|I|
$$ 
for some $\gamma>0$ independent of $I$.
A very natural question is to establish a link between $\gamma$ and $C(\Gamma)$. This is particularly interesting for applications where one wants to measure $f$ on a 
set $\Gamma$ as small as possible while aiming at an estimate closest possible 
to the norm of $f$. This requires
a knowledge of the sampling constants depending on the size of $\Gamma$. For the Paley-Wiener
space, essentially optimal quantitative estimates of these constants were given by Kovrijkine \cite{Ko}, {see also} \cite{Re}. 

The Paley-Wiener space is a special occurrence of so-called model spaces (see below for precise
definitions), which do not only occur in the setting of signal theory, but also in control problems
as well as in the
context of second order differential operators (e.g. Schr\"odinger operators, Sturm-Liouville problems, which naturally connect the theory to that of de Branges spaces of entire functions 
which are unitarily equivalent to a subclass of model spaces), {see} \cite{MP} for an interesting account of such connections. As it turns out the
problem of dominating sets in model spaces was completely solved by Volberg who provided
a description for general inner functions in terms of harmonic measure \cite{V}
(see also \cite{HJ1} for $p=1$).
Dyakonov was more interested in a geometric
description for dominating sets \cite{Dy}. He revealed that the notion of relative density indeed 
generalizes to a much broader class than just Paley-Wiener spaces. His achievement is that
relatively dense sets are dominating in a model space $K_{\Theta}^p$ precisely when the inner function
$\Theta$ defining the model space has bounded derivative. 

The aim and novelty of this paper is to consider an appropriate density condition
and quantitive estimates of the sampling
constants in the broader setting of model spaces associated to general meromorphic inner functions
(these correspond exactly to the situation of de Branges spaces of entire functions). 
Our framework is larger than Dyakonov's in that we allow $\Theta$ to have an unbounded
derivative, and this immediately leads to the question on how to measure the size of the set $\Gamma$. 
Indeed, according to Dyakonov's results,
relative density is intimately related to the boundedness of $\Theta'$, and so, in our
more general setting, we have to replace uniform intervals of length $a$, which were already considered
in the Paley-Wiener space in \cite{Pa1,Pa2,Kac,LS} or more generally in \cite{Dy} when
$\Theta'$ is bounded, by a suitable family of intervals. The main tool in this direction is
a Whitney type covering of $\R$ introduced by Baranov \cite{B1} in which the length of the
test intervals is given in terms of the distance to some
level set of $\Theta$ (as a matter of fact, in the Paley-Wiener space this distance is constant).
Once we obtain a geometric characterization of dominating sets, we determine the dependence of
the sampling constant $C(\Gamma)$ on the parameters arising in the characterization.

Let us discuss some of the main ingredients used in this paper.
A central tool is a Remez-type inequality which allows to estimate mean
values of holomorphic functions on an interval in terms of their means 
on a measurable subset of the interval. This involves some uniform estimates of the function
which we will explain a little bit more below.
It is essentially this Remez-type inequality which determines the
dependence of the sampling constants on the density.
The Remez-inequality requires also some relative
smallness condition of the intervals we have to consider. For this reason we
first need to reduce the problem to intervals which satisfy this smallness condition. 
For this reduction we introduce a class of test sets which have a fixed size with respect to the
Baranov intervals (independently of $\Gamma$). The sampling constants for this class of test sets
turn out to be 
uniformly bounded from below by a reverse Carleson measure result discussed in
\cite{BFGHR}. 
In the next step, following the line of proof performed in \cite{Ko}
({\it see also} \cite{GJ}), we will show that there are sufficiently many intervals in a test set containing
points where the behavior of the functions and their derivatives are controlled by a generalized 
Bernstein type inequality originally due to Baranov. With this control, we can estimate the Taylor
coefficients of the function whose norm we are interested in, and which allows us to obtain the
uniform control required in the Remez-type inequality we alluded to above.
Finally, as in Kovrijkine's work (\cite{Ko}; similar estimates have also been used in \cite{NSV}), 
we are then able to apply the Remez type inequality on the small intervals involving the
proportion of $\Gamma$ contained in the test sets to obtain the quantitative control we are interested
in. 

We will also recall the proof of Baranov's Bernstein result with the necessary details
since we will need a slightly more precise form than that given in \cite{B2}.
\medskip

The paper is organized as follows. In the next section we introduce the necessary notation
and our main result. In the succeding section we will prove the necessity of our density condition.
Bernstein type inequalities as considered by Baranov are one main ingredient in the proof of the
sufficiency and will be
discussed in Section \ref{background}, as well as a reverse Carleson measure result in 
model spaces. In the last section we prove the sufficiency of our density condition.
\medskip

Throughout the paper, we use the notation $C(x_1,\ldots,x_n)$  to denote 
constants that depend only on some parameters $x_1,\ldots,x_n$ that
may be numbers, functions or sets. Constants may change from line to line. 

We occasionally write $A\simeq B$ to say that there exists a constant $C$ independent of $A,B$ such that
$C^{-1} A\leq B\leq CA$.

\section{Notation and statement of the main result}

In order to state our main result, we first need to introduce the necessary notation.
Let $\Theta$ be an inner function in the upper half--plane {$\C^+=\{z\in\C\,:\Im z>0\}$}, {\it i.e.} a bounded
analytic function with non tangential limits of modulus 1 almost everywhere on $\R$. 
For $1\leq p\leq +\infty$,  we denote by  $K_\Theta^p$ the shift co-invariant subspace 
(with respect to the adjoint of the multiplication semi-group $e^{isx}$, $s>0$)
in the Hardy class $H^p=H^p(\C^+)$,
which is given (on $\R$) by
\[
K_\Theta^p=H^p\cap \Theta \overline{H^p}.
\]
If $\Theta (z)=\Theta_\tau(z)= \exp(2i\tau z)$ for some $\tau>0$, then  $K_{\Theta_\tau}^p$ is up to the entire factor $e^{-i\tau z}$ equal to the Paley--Wiener space $PW^p_\tau$, 
which is the space of entire functions on $\C$ of exponential type at most $\tau$, whose restriction to the real line belong to $L^p(\R)$.

Before considering more general model spaces, 
let us briefly recall the situation in the Paley-Wiener space.

\begin{definition}
A measurable set $\Gamma\subset \R$ is called

--- \emph{relatively dense} if there exists $\gamma,\ell>0$ such that, for every $x\in\R$,
\begin{equation}\label{rel-dense}
|\Gamma\cap [x,x+\ell)|\geq \gamma\ell.
\end{equation}

--- \emph{dominating} for $K_\Theta^p$ if there exists a constant $C(\Theta,p,\Gamma)$
such that, for every $f\in K_\Theta^p$,
\begin{equation}
\label{eq:dominating}
\int_{\R}|f|^p\d x\le C(\Theta,p,\Gamma) \int_{\Gamma} |f|^p\d x.
\end{equation}
The smallest constant $C(\Theta,p,\Gamma)$ appearing in \eqref{eq:dominating} will be
called the sampling constant of $\Gamma$ in $K^p_{\Theta}$.
\end{definition}

Fixing $\tau$ and $p$, the
Logvinenko-Sereda theorem \cite{LS,HJ1} on equivalence of norms asserts that $\Gamma$ 
is relatively dense if and only if it is a dominating set for $PW^p_\tau$.

As mentioned in the introduction, the Logvinenko-Sereda theorem has been extended to model spaces by Volberg \cite{V} and Havin-J\"oricke \cite{HJ1}. 
Dyakonov \cite[Theorem 3]{Dy} proved that the classes
of dominating sets for $K_\Theta^p$ and relatively dense sets coincide  if and only if $\Theta$ has a bounded derivative: $\Theta'\in L^\infty(\R)$.  Though Dyakonov was not interested in explicit
constants $C(\Theta,p,\Gamma)$ a precise analysis of his method allows to obtain some estimates
in this more restrictive situation. In order to state this estimate, let us
recall the definition of  harmonic measure. For $z=x+iy\in \C^+$ and $t\in\R$, let
\[
 P_z(t)=\frac{1}{\pi}\frac{y}{(x-t)^2+y^2}
\]
be the usual Poisson kernel in the upper half plane. 
For a measurable set  $\Gamma\subset \R$, we denote by $\omega_z(\Gamma)$ 
its harmonic measure at $z$ defined by:
$$
\omega_{z}(\Gamma)=\int_{\Gamma} P_z(t)\,\mathrm{d}t.
$$
It is easily shown that $\Gamma$ is relatively dense if and only if
$\delta_y:=\inf\{\omega_z(\Gamma)\,:\ \Im z= y\}>0$ for some (all) $y>0$.

Dyakonov proved that, when $\Theta'\in L^{\infty}(\R)$ and $\delta_y>0$ then $\Gamma$ is
dominating for $K_\Theta^p$.
An estimate of the sampling constants is not given in
\cite{Dy} but may be deduced from the proof (see Remark \ref{rem1}). 

The aim of this work is to improve the estimates 
of Dyakonov's theorem as well as to establish a link between an appropriate density and the sampling constant
for general meromorphic inner functions. Meromorphic inner functions are inner functions the zeros of 
which only accumulate at infinity, and whose singular inner part is reduced to $e^{i\tau z}$, $\tau\ge 0$, {\it i.e.},
\[
 \Theta(z)=e^{i\tau z}\prod_{\lambda\in \Lambda}b_{\lambda}, \quad z\in \C_+,
\]
where $\Lambda=\{\lambda\} \subset\C_+$ is a Blaschke sequence in the upper
half plane,
$$
\sum_{\lambda\in \Lambda }\frac{\Im \lambda}{1+|\lambda|^2}<+\infty
$$
only accumulating at $\infty$. Recall that the Blaschke factor in the upper half plane
is given by
\[
 b_{\lambda}(z)=\frac{|{\lambda}^2+1|}{\lambda^2+1}\frac{z-\lambda}{z-\overline{\lambda}},\qquad z\in \C_+.
\]

In the more general situation when $\Theta$ has not necessarily bounded derivative, 
we have to adapt the concept of relative density. More precisely the size of 
the testing intervals, which was constant in the setting of classical relative density, has to take into
account the distribution of zeros of $\Theta$. To this end, we will need the notion of
sublevel set:
given $\varepsilon\in(0,1)$, this is defined by
$$
L(\Theta,\eps)=\{z\in\C_+\; :\; |\Theta(z)|<\eps\}.
$$

With this definition in mind, we can introduce one key tool in our setting. In \cite[Lemma 3.3]{B2}, 
Baranov constructed a Whitney type covering of $\R$. 
For this, 
define 
$$
d_{\eps}(x)=\dist\bigl(x,L(\Theta,\eps)\bigr).
$$
Then Baranov's construction yields a disjoint covering of $\R$
by intervals $I_n$ the length of which is comparable to the distance to the sublevel set.
More precisely, following Baranov, we have $I_n=[s_n,s_{n+1})$, where $(s_n)_n$ is a strictly
increasing sequence $\lim_{n\to\pm\infty}s_n=\pm\infty$, given by
\begin{eqnarray}\label{defBara}
 \int_{s_n}^{s_{n+1}} \frac{1}{d_{\eps}(x)}\d x=c,
\end{eqnarray}
where $c>0$ is some fixed constant.
Moreover there exists $\alpha\ge 1$ such that
\begin{eqnarray}\label{BarConst}
 \frac{1}{\alpha} d_{\eps}(x)\le |I_n|\le \alpha d_{\eps}(x),\quad
 x\in I_n.
\end{eqnarray} 
Such a sequence will be henceforth called a Baranov sequence.
In order to put our work in some more perspective to Dyakonov's work we shall recall 
an important connection between $d_{\eps}(x)$ and $|\Theta'(x)|$. This requires the
notion of the spectrum of $\Theta$ which is defined as
\[
 \sigma(\Theta)=\{z\in \C_+\cup\R\cup\{\infty\}:\liminf_{\zeta\to z}|\Theta(\zeta)|=0\}.
\]
For meromorphic inner functions $\sigma(\Theta)$ consists of the zeros of $\Theta$ and, provided
$\Theta$ is not a finite Blaschke product, the point $\infty$. 
Setting $d_0(x)=\dist(x,\sigma(\Theta))$,
Baranov showed in \cite[Theorem 4.9]{B1} that 
\[
 d_{\eps}(x)\simeq \min (d_0(x),|\Theta'(x)|^{-1}),\quad x\in\R.
\]
In particular, the Baranov intervals will be small when $\Theta'$ is big. 
\\

Recall that Volberg \cite{V} characterized dominating sets in $K_\Theta^p$ as those sets for which
$$
\inf_{z\in\C^+}\bigl(|\Theta(z)|+\omega_z(\Gamma)\bigr)>0.
$$
This characterization, based on harmonic measure, gives us an intuition that
we cannot expect to measure the size of $\Gamma$ only by looking at how much mass it puts on
a Baranov interval. Indeed, harmonic measure of a set is not very sensitive with respect to the
exact place where we put the set. For this reason we need to consider {\it amplified} intervals.
For an interval $I$ and $a>0$, we will denote by $I^a$ the amplified interval of $I$
having same center as $I$ and length $a|I|$.
\\

We are now in a position to introduce our new notion of relative density.

\begin{definition}
Let $(I_n)_{n\in\Z}$ be a Baranov sequence, $\gamma\in (0,1)$ and $a\geq 1$. A Borel set $\Gamma$ 
is called {\em $(\gamma,a)$--relatively dense} with respect to $(I_n)_{n\in\Z}$ if, for every $n\in\Z$,
\begin{eqnarray}\label{denscond}
 |\Gamma\cap I^a_n|\ge\gamma |I_n^a|=\gamma a|I_n|.
\end{eqnarray}
In case $a=1$ we will simply call the sequence $\gamma$-dense with respect to $(I_n)_{n\in\Z}$.
\end{definition}

We would like to mention that if $(I_n)_{n\in\Z}$ and $(\widetilde{I}_n)_{n\in\Z}$ are two Baranov sequences then if $\Gamma$ is {\em $(\gamma,a)$--relatively dense} with respect to $(I_n)_{n\in\Z}$  then there is a $\widetilde{\gamma}>0$
and a $\widetilde{a}>1$ such that $\Gamma$ is also {\em $(\widetilde{\gamma},\widetilde{a})$--relatively dense} with respect to 
$(\widetilde{I}_n)_{n\in\Z}$. 
This follows essentially from the fact that the Baranov intervals are defined by 
$\dst\int_{I_n}d_{\eps}^{-1}(x)\d x=c$ where $c$ is a constant (different for $(I_n)_{n\in\Z}$ and $(\widetilde{I}_n)_{n\in\Z}$), see e.g.\ \cite[Lemma 3.3]{B2}, and that neighboring intervals are of comparable length.
Therefore, in the remaining part of the paper, $(I_n)_{n\in\Z}$ will be a {\em fixed} Baranov sequence.

\smallskip

The main result of this paper is the following.

\begin{theorem}\label{thm2}
Let $p\in(1,\infty)$, $\Theta$ be a  meromorphic inner function,
and let $\Gamma\subset\R$ be a measurable set. Then the following  conditions are equivalent 
\begin{enumerate}
\renewcommand{\theenumi}{\roman{enumi}}
\item\label{condLS1} $\Gamma$ is $(\gamma,a)$--relatively dense with respect to $(I_n)_{n\in\Z}$, 
for some $\gamma>0$ and some $a\ge 1$,
\item  \label{condVolb}
$
\inf_{z\in \C^+} (\omega_z(\Gamma)+|\Theta(z)|)>0
$
\item\label{condLS2} there exists $C>0$ such that  for every $f\in K_\Theta^p$,
$$
\int_{\R}|f(x)|^p\d x\leq C \int_{\Gamma}|f(x)|^p\d x.
$$
\end{enumerate}
Moreover,  if the equivalent conditions (i)-(iii) hold, then
$$
{C}\leq\exp \Big( C(\Theta,p,\varepsilon) \frac{a^2}{\gamma}\ln \frac{1}{\gamma}\Big).
$$
\end{theorem}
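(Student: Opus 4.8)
\textbf{Proof proposal for Theorem \ref{thm2}.}

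The plan is to prove the cycle of implications (ii)$\Rightarrow$(i), (i)$\Rightarrow$(iii), (iii)$\Rightarrow$(ii), with the quantitative bound falling out of the proof of (i)$\Rightarrow$(iii). The implication (iii)$\Rightarrow$(ii) is essentially Volberg's theorem, so one reduces it to a known result: if $\Gamma$ is dominating then $\inf_{z}(\omega_z(\Gamma)+|\Theta(z)|)>0$; I would cite \cite{V} (and \cite{HJ1} for $p=1$ flavor issues) and at most sketch the reproducing-kernel argument. For (ii)$\Rightarrow$(i), I would use the characterization recalled in the text that relative density of $\Gamma$ is equivalent to $\inf\{\omega_z(\Gamma):\Im z=y\}>0$, together with the fact that on a Baranov interval $I_n$ one has $|I_n|\simeq d_\eps(x)$, so the point $z_n = x_n + i|I_n|$ (with $x_n$ the center of $I_n$) sits at a definite ``hyperbolic'' height above $\R$ relative to the local scale, and satisfies $|\Theta(z_n)|$ bounded away from $1$ because $z_n$ is comparable in hyperbolic distance to the sublevel set $L(\Theta,\eps)$; hence $|\Theta(z_n)|\le \eta<1$ for a uniform $\eta$, which forces $\omega_{z_n}(\Gamma)\ge \delta>0$, and a Harnack/Poisson-kernel estimate converts $\omega_{z_n}(\Gamma)\ge\delta$ into $|\Gamma\cap I_n^a|\ge \gamma a|I_n|$ for suitable $a$ and $\gamma$ depending on $\delta$ and the Baranov constant $\alpha$.

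The heart of the matter is (i)$\Rightarrow$(iii) with the explicit constant, and this is where the machinery advertised in the introduction enters. First I would reduce from the amplified intervals $I_n^a$ to intervals on which the Remez inequality is applicable: partition each $I_n^a$ into $O(a)$ subintervals of length comparable to $|I_n|$, and note that by pigeonhole a positive proportion (controlled by $\gamma/a$, roughly) of these subintervals $J$ satisfy $|\Gamma\cap J|\ge c(\gamma/a)|J|$; call these ``good'' intervals. Using the reverse Carleson measure result of \cite{BFGHR}, the family of such fixed-scale test intervals carries a uniform lower sampling bound, i.e. $\int_\R|f|^p \le C(\Theta,p,\eps)\sum_{J\text{ good over all }n}\int_J|f|^p$ up to controlling the number of subintervals per $I_n$, so it suffices to dominate $\int_J|f|^p$ by $\int_{\Gamma\cap J}|f|^p$ for each good $J$ with a constant of the claimed exponential form. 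For this, following Kovrijkine's scheme \cite{Ko} (see also \cite{GJ,NSV}), on each good interval $J$ I would locate, using the Baranov--Bernstein inequality (the refined form to be proved in Section \ref{background}), a subinterval $J'\subset J$ of comparable length on which $f$ and enough of its derivatives obey the bound $|f^{(k)}(x)| \le (A/|J|)^k M_J$, where $M_J = \max_{J}|f|$ and $A$ is an absolute constant; this lets one bound the Taylor coefficients of $f$ about a point of $J'$ and show $f/M_{J'}$ behaves like a polynomial of controlled degree $\sim \ln(1/\gamma)$ on $J'$ in the sense needed for Remez. Then the Remez-type inequality on $J'$, applied with the measurable subset $\Gamma\cap J'$ of relative size $\gtrsim \gamma/a$, yields $\max_{J'}|f| \lesssim \bigl(C/(\gamma/a)\bigr)^{C\ln(1/\gamma)}\bigl(\text{mean of }|f|\text{ on }\Gamma\cap J'\bigr)$, and unwinding $\max$ versus $L^p$ means (again via Bernstein on $J'$) gives $\int_J|f|^p \le \exp\bigl(C(\Theta,p,\eps)\,\tfrac{a}{\gamma}\ln\tfrac1\gamma\bigr)\int_{\Gamma\cap J}|f|^p$; summing over $J$ and tracking the extra factor $a$ from the number of subintervals and the passage through $I_n^a$ produces the stated $\exp\bigl(C(\Theta,p,\eps)\tfrac{a^2}{\gamma}\ln\tfrac1\gamma\bigr)$.

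The main obstacle I anticipate is the bookkeeping that keeps the dependence on $a$ and $\gamma$ exactly as claimed: one must be careful that (a) the reverse Carleson step and the ``good subinterval'' pigeonhole together cost only a polynomial factor in $a$ and $1/\gamma$ (absorbed into the exponential), (b) the Bernstein-type control of derivatives on the located subinterval $J'$ is genuinely at scale $|I_n|$ and not at the coarser scale $a|I_n|$ — this is why the amplification has to be undone before applying Remez, and it is the source of one power of $a$ — and (c) the degree entering the Remez inequality is $O(\ln(1/\gamma))$ rather than larger, which is what turns the Remez loss into $\exp(O(\gamma^{-1}\ln\gamma^{-1}))$ instead of something worse. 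A secondary technical point is ensuring the refined Baranov--Bernstein inequality holds with constants depending only on $\Theta$, $p$, $\eps$ and not on the particular interval, which is exactly why the excerpt promises to redo its proof ``with the necessary details''.
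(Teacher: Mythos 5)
Your skeleton for (i)$\Rightarrow$(iii) is essentially the paper's: pigeonhole the amplified interval $I_n^a$ into fixed-scale pieces to select a subfamily on which $\Gamma$ has density $\gtrsim\gamma$, prove that this subfamily is \emph{uniformly} dominating via the reverse Carleson theorem of \cite{BFGHR} combined with the Havin--J\"oricke/Dyakonov two-constants argument (Theorem \ref{thm1}), and then run Kovrijkine's machine (Bernstein $\to$ good intervals $\to$ good points $\to$ Taylor control of $M/m$ $\to$ Remez, Lemma \ref{lem:ko}). However, your quantitative bookkeeping is not consistent with the bound you are trying to prove. The reverse Carleson step does \emph{not} cost only a polynomial factor in $a$ and $1/\gamma$: the selected set $F_0^{a,\sigma}$ covers only a proportion $\eta\simeq\gamma$ of each chosen subinterval, so its harmonic measure from the reference curve $\Omega_{\Theta,\eps}$ is only $\delta\simeq\gamma/a^2$ (Lemma \ref{harmmeas}), and the two-constants trick raises a fixed constant to the power $1/\delta$, i.e.\ this step already costs $e^{Ca^2/\gamma}$. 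That factor then propagates into the Bernstein bound on $F_0^{a,\sigma}$ and hence into the definition of good points, so the quantity $\ln(M/m)$ entering Remez is of order $a^2/\gamma$, not $O(\ln(1/\gamma))$ as you claim; the final constant is (base $\simeq 1/\gamma$) raised to the power $\simeq a^2/\gamma$. Indeed, a Remez degree of order $\ln(1/\gamma)$ would produce $\exp\bigl(C(\ln\gamma^{-1})^2\bigr)$, which would beat Kovrijkine's essentially optimal Paley--Wiener bound $\exp(C\gamma^{-1}\ln\gamma^{-1})$; this is a sign that the accounting cannot work as you describe. So the architecture is right but the claimed sources of the factors $a^2/\gamma$ and $\ln(1/\gamma)$ must be swapped and justified.

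The second genuine gap is in your (ii)$\Rightarrow$(i). The fact that $|I_n|\simeq d_\eps(x)$ for $x\in I_n$ only says that $L(\Theta,\eps)$ comes within distance $\simeq|I_n|$ of $I_n$; the nearest sublevel point may have imaginary part arbitrarily small compared with $|I_n|$ (e.g.\ a zero of $\Theta$ just off the end of $I_n$, very close to $\R$). Then $z_n=x_n+i|I_n|$ is at unbounded hyperbolic distance from $L(\Theta,\eps)$ and $|\Theta(z_n)|$ can be as close to $1$ as one likes, so no Harnack-type argument gives $|\Theta(z_n)|\le\eta<1$ uniformly; worse, to exploit (ii) you need $|\Theta(z_n)|$ strictly below the infimum constant $c$ of (ii), not merely below $1$. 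The paper sidesteps this entirely: it quotes Volberg \cite{V} for (ii)$\Leftrightarrow$(iii) and proves (iii)$\Rightarrow$(i) directly by testing the sampling inequality on normalized reproducing kernels $k_{\lambda_n}/\|k_{\lambda_n}\|_p$ at points $\lambda_n\in L(\Theta,\eps)$ nearest to $I_n$ (where $|\Theta(\lambda_n)|\le\eps$ \emph{by definition}), using the lower bound $\|k_{\lambda_n}\|_p^p\gtrsim(1-\eps)^p(\Im\lambda_n)^{1-p}$, a tail estimate to localize the kernel's mass in $I_n^a$, and a dyadic-annulus counting argument to extract the density. If you insist on your route you must either evaluate at the sublevel points themselves and control the Poisson kernel there, or pass through (iii) as the paper does; as written, the step fails.
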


We need to make some remarks here. First, the equivalence between (ii) and (iii) is of course
Volberg's result. His result works in a much broader situation since he considers arbitrary
inner functions and not only meromorphic inner ones. Also he considers not only harmonic
measure of subsets of $\R$ but harmonic extensions of general $L^1$ functions (with respect
to the measure $(1+|x|^2)^{-1}\d x$).
One novelty here is the connection
with the new notion of relative density. Next we want to discuss two main differences 
with Dyakonov's work besides the control of the sampling constant. 
The obvious difference is that the boundedness assumption of $\Theta'$
is not required. To handle that situation, the Baranov intervals will be small where $\Theta'$ is
big, and so the
$(\gamma,a)$-relative density means, losely speaking, that the 
more zeros of $\Theta$ we put somewhere, the more
mass of $\Gamma$ has to be localized there in order to correctly measure functions in $K_{\Theta}^p$.
Another observation is that even when $\Theta'$ is bounded, this result gives some new
information. More precisely, when $\Theta$ has very few zeros in certain regions, 
then the corresponding Baranov intervals will be
big, and so we distribute mass of $\Gamma$ --- comparably to the length of the Baranov interval --- wherever we want in such an interval (this is perfectly 
coherent with Volberg's harmonic measure characterization), while
classical relative density requires some uniform distribution in such big intervals.
Indeed, each subinterval of sufficiently large but fixed length has to contain a fixed portion of $\Gamma$.

A special situation occurs when $a=1$, i.e.\ when the sequence $\Gamma$ is $\gamma$-relatively 
dense with respect to $(I_n)_n$,
in other words each Baranov interval (without amplification) contains a least
proportion $\gamma$ of $\Gamma$. In this situation we improve significantly the constant. More precisely
we have the following result.
\begin{corollary}\label{Cora=1}
Let $p\in(1,\infty)$, $\Theta$ be a  meromorphic inner function,
and let $\Gamma\subset\R$ be a measurable set. If $\Gamma$ is $\gamma$-relatively dense with
respect to $(I_n)_{n\in\Z}$, $\gamma>0$, then for every $f\in K_\Theta^p$,
$$
\int_{\R}|f(x)|^p\d x\leq \Big(\frac{1}{\gamma}\Big)^{C(\Theta,p,\varepsilon)} \int_{\Gamma}|f(x)|^p\d x.
$$
\end{corollary}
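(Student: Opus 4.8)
The plan is to derive Corollary \ref{Cora=1} as the special case $a=1$ of Theorem \ref{thm2}, but more carefully than by simply plugging $a=1$ into the general bound. Note that $\gamma$-relative density means precisely $(\gamma,1)$-relative density, so condition (i) of the theorem holds with $a=1$ and the given $\gamma$; hence $\Gamma$ is dominating for $K_\Theta^p$ and all that remains is the sharpened numerical estimate
$$
C\le\left(\frac1\gamma\right)^{C(\Theta,p,\eps)}.
$$
Taking $a=1$ in the theorem's conclusion already gives $C\le\exp\bigl(C(\Theta,p,\eps)\gamma^{-1}\ln(1/\gamma)\bigr)$, which is \emph{weaker} than what the corollary claims, so the point is that the proof of the theorem produces a better dependence when no amplification is needed. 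Concretely, the exponential blow-up $\exp(C a^2\gamma^{-1}\cdots)$ in the theorem comes from the Remez-type step applied on the small subintervals where one controls the proportion of $\Gamma$; the factor $a^2/\gamma$ tracks the two passages (first from amplified Baranov intervals down to the small ``Remez-admissible'' intervals, then through the Remez inequality itself), and when $a=1$ the reduction to small intervals still costs something but the Remez exponent can be organized so that the final constant is only \emph{polynomial} in $1/\gamma$, with exponent depending on $\Theta,p,\eps$ through the Bernstein constants, the Baranov parameters $\alpha,c$, and the reverse-Carleson constant.

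The key steps, in order, are as follows. First, invoke the reverse Carleson measure result from \cite{BFGHR} (as recalled in Section \ref{background}) to get a uniform lower bound for the sampling constant over the fixed-size test sets built from the Baranov intervals $I_n$; this is independent of $\Gamma$ and contributes only to $C(\Theta,p,\eps)$. Second, use the Baranov–Bernstein inequality to locate, inside each relevant Baranov interval, a subinterval on which $f$ and a controlled number of its derivatives are bounded in terms of the local $L^p$ mean of $f$; from this deduce, as in Kovrijkine's argument \cite{Ko}, a bound on the Taylor coefficients of $f$ there, hence a uniform (sup-norm to mean-norm) estimate on a small interval $J\subset I_n$ of length a fixed fraction of $|I_n|$. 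Third, since $\Gamma$ is $\gamma$-dense with respect to $(I_n)$, the portion $|\Gamma\cap J|$ is $\gtrsim\gamma|J|$ (after possibly shrinking $J$ by a $\gamma$-dependent but harmless factor, or by choosing $J$ to capture a proportional part of $\Gamma\cap I_n$), so one may apply the Remez-type inequality on $J$ comparing $\int_J|f|^p$ with $\int_{\Gamma\cap J}|f|^p$; the Remez constant here is of the form $(C/\gamma)^{m}$ where $m$ is the number of controlled derivatives, a quantity that depends only on $\Theta,p,\eps$. Fourth, sum over $n$ using the covering property and the bounded-overlap of the relevant intervals to pass from the local estimates back to $\int_\R|f|^p\le C\int_\Gamma|f|^p$, reading off $C\le(1/\gamma)^{C(\Theta,p,\eps)}$.

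The main obstacle is the third step: making the Remez inequality yield a bound \emph{polynomial} rather than exponential in $1/\gamma$, which is exactly where the hypothesis $a=1$ is used essentially. In the general theorem one must first shrink the amplified interval $I_n^a$ down to a Remez-admissible small interval, and controlling how much of $\Gamma$ survives that shrinking forces an $a^2/\gamma$ loss that sits in the exponent of the exponential; when $a=1$ one can instead arrange the small interval $J$ so that $\Gamma\cap J$ retains a fixed proportion $\gtrsim\gamma$ of its length and apply Remez only once, so the Remez exponent $m=m(\Theta,p,\eps)$ (the number of derivatives controlled via Bernstein, which does not depend on $\gamma$) becomes the exponent of $1/\gamma$ in the final bound. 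Verifying that the Bernstein/Taylor-coefficient machinery indeed lets one take $m$ independent of $\gamma$, and that the bounded overlap in the summation step introduces no further $\gamma$-dependence, is the crux; once that is in place the corollary follows by specializing and bookkeeping the constants.
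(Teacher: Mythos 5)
Your proposal is correct and follows essentially the same route as the paper: one reruns the proof of (i)$\Rightarrow$(iii) with $a=1$ and checks that the only remaining source of $\gamma$ in the Remez step is the ratio $|J_n|/|\Gamma\cap J_n|\le 1/\gamma$, while the Remez exponent stays bounded. The paper implements this by observing that for $a=1$ each test interval $I_n^{1,\sigma}=I_{n,\sigma(n)}$ is a single unamplified piece, so \eqref{dens} holds with $\eta=1$ instead of $\eta=\gamma/2$ (Corollary \ref{casea=1}); consequently the uniform domination constant, hence the Bernstein constant in \eqref{condLS1bis} and the ratio $M/m$ in \eqref{maxestimate}, are all $\gamma$-free, and \eqref{finalestimate} becomes $(C_1/\gamma)^{C_2}$ as claimed.
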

We should point out that in this situation we thus obtain a polynomial dependence on 
$1/\gamma$ in accordance with Kovrijkine's optimal result in the Paley-Wiener space.

\begin{remark}\label{rem1}
Let us also compare this result to what may be obtained from Dyakonov's proof when 
$\Theta'\in L^{\infty}(\R)$. In this situation, there is a suitable $y>0$  such that
$L(\Theta,\eps)\subset \{z\in \C:\Im z>y\}$ 
(see e.g.\ \cite[p.\ 2222]{Dy}). Fix such an $y>0$ and let
$m_y=\inf\{\Theta(z)\,:\ 0< \Im z< y\}>0$ and
$\delta_y=\inf\{\omega_z(\Gamma)\,:\ \Im z= y\}$. Recall that Dyakonov proved that
if $\delta_y>0$, then $\Gamma$ is dominating. A careful inspection of his proof
further leads to the estimate
$$
\|f\|_{L^p(\R)} \leq \frac{2^\frac{1}{p\delta_y}}{m_y}\|f\|_{L^p(\Gamma)}, \qquad f\in K_\Theta^p,\ 0<p<\infty.
$$
This estimate gives an exponential control of the sampling constant depending on 
$\delta_y$ (and hence on $\gamma$), while our estimate is polynomial in $\gamma$.
\end{remark}

\begin{remark}\label{rem2}
We shall discuss here another natural guess for a necessary density condition.
As mentioned above, the Baranov intervals are given by \eqref{defBara} where the constant
$c$ is fixed arbitrarily.
Then one could think that if $\Gamma$ is dominating then there exists a suitable 
$c$ such that the associated
intervals satisfy $|I_n\cap\Gamma|\ge\gamma |I_n|$.  As it turns out, this does not work.

Here is an example :
let $\Lambda=(2^ni)_{n\ge 0}$. Then $L(\Theta,\varepsilon)$ is like a Stolz 
type angle 
$$ \Gamma_{\alpha}(0)=
\{z=x+iy: y\ge \alpha (1+|x|)\},
$$ 
and hence $d_{\varepsilon}(x)\simeq 1+|x|$.

From Volberg's characterization, it is clear that $\Gamma=\R_-$ is dominating
(in the Stolz angle $\Gamma_{\alpha}(0)$, 
the harmonic measure of $\R_-$ is bounded from below by a strictly
positive constant). However it is not dominating. Indeed from $d_{\eps}(x)\simeq 1+|x|$
it can be deduced that one can choose
$I_n=[q^n,q^{n+1})$ for $n\in\N$, and any fixed $q>1$, $I_{-n}=-I_n$.
Clearly for $n$ big enough we have $|I_n\cap \R_-|=\emptyset$.

Observe that for $a>q$ there is $\gamma>0$
such that  $|I_n^a\cap \R_-|\ge\gamma |I_n^a|$, so that $\R_-$ is
$(\gamma,a)$-dense, while it is never $\gamma$-dense with respect to any Baranov sequence.
\end{remark}

\begin{remark}
Meromorphic inner functions are those appearing in the context of de Branges spaces of 
entire functions \cite{dBR}. 
Note that for an entire function $E$ satisfying $|E(z)| \geqslant |E(\overline{z})|$, 
$\Im z > 0$, and having zeros only in the open lower half plane, the de Branges space is
defined by $\mathcal{H}(E)
=\{F \in \operatorname{Hol}(\C): {F}/{E}, {F^{*}}/{E} \in {H}^2\}$ and
$\|F\|_{\mathcal{H}(E)}=\|F/E\|_{L^2(\R)}$ (here $F^*(z)=\overline{F(\overline{z})}$). 
Since $F\mapsto F/E$ maps unitarily $\mathcal{H}(E)$ 
onto the model space ${K}_{\Theta}$, where $\Theta(z)=E^*(z)/E(z)$,  
an immediate consequence of our results is a
characterization of those measurable $\Gamma$ for which $\|F\|_{\mathcal{H}(E)}^2\le C\int_{\Gamma}
|f(x)|^2/|E(x)|^2dx$ (with the same control of constants as in the corresponding model spaces).
\end{remark}

\section{Proof of (iii) implies (i) in Theorem \ref{thm2}.}

Our aim is to test the sampling inequality on normalized reproducing kernels. 
Recall that the reproducing kernel for $K_{\Theta}^p$ at $\lambda\in \C_+$ is defined by
\begin{eqnarray}\label{repker}
 k_{\lambda}(z)=\frac{i}{2\pi}\frac{1-\overline{\Theta(\lambda)}\Theta(z)}{z-\overline{\lambda}},
 \quad z\in \C_+.
\end{eqnarray}
This means that for every $f\in K_{\Theta}^p$ and every $\lambda\in \C_+$,
we have $f(\lambda)=\langle f,k_{\lambda}\rangle
=\int_{\R} f(x)\overline{k_{\lambda}(x)}\d x$.
Then, classical estimates give for $\lambda=x+iy\in L(\Theta,\eps)$,
\[
  \|k_{\lambda} \|_p^p =\frac{1}{(2\pi)^p} \int_{\R}
 \left|\frac{1-\overline{\Theta(\lambda)}\Theta(t)}{t-\overline{\lambda}}\right|^p\d t
 \ge \frac{(1-\eps)^p}{(2\pi)^p}\int _{\R}
 \frac{1}{|t-{\lambda}|^p}\d t
 \simeq \frac{1}{y^{p-1}}  \int_{\R}\frac{1}{1+|t|^p}\d t
\]
Since $p>1$, the integral appearing in the last expression converges. Let $c_p$ be the
constant such that $\|k_{\lambda} \|_p^p \ge c_p/y^{p-1}$.
For the discussions to come we set
\[
 C_1=\left(\frac{1+\eps}{2\pi c_p}\right)^p,
\]
where $\eps$ defines the sublevel set $L(\Theta,\eps)$.

Now we are now in a position to prove the above mentioned implication.
Suppose that we have 
(iii) of Theorem \ref{thm2}
\begin{eqnarray*}
\int_{\R}|f(x)|^pdx\le C\int_{\Gamma}|f(x)|^p\d x,
\end{eqnarray*}
which we will now apply to normalized reproducing kernels. This means that 
\begin{eqnarray}\label{sameq}
 \int_{\Gamma} \left|\frac{k_{\lambda}(x)}{\|k_{\lambda}\|_p}\right|^p\d x\ge\frac{1}{C},
 \quad \lambda\in \C_+.
\end{eqnarray}

Recall from \eqref{BarConst} that there is $\alpha$ such that
\begin{eqnarray*}
 \frac{1}{\alpha} d_{\eps}(x)\le |I_n|\le \alpha d_{\eps}(x),\quad
 x\in I_n,\ n\in\N.
\end{eqnarray*} 
This is in particular true when $x=t_n$ is the center of $I_n$.
We deduce that there exists $\lambda_n=x_n+iy_n\in L(\Theta,\eps)$ with
$|t_n-\lambda_n|/\alpha \le |I_n|\le \alpha |t_n-\lambda_n|$. It should be noted that
$\Re \lambda_n$ does not need to be in $I_n$.
Still we know that
$|x_{n}-t_{n}|\le \alpha |I_{n}|$.
Then 
\[
 \tilde{I}_n:=[x_n-\frac{b}{\alpha}|I_n|,x_n+\frac{b}{\alpha}|I_n|)
 \subset [x_n-b|I_n|,x_n+b|I_n|)\subset I_n^a
\]
for $b=a/2-\alpha$ (which, in order for $b>0$ requires $a>2\alpha$).

Then given any $a\ge 1$,
\begin{eqnarray}\label{eq2}
\int_{\Gamma} \left|\frac{k_{\lambda_{n}}(x)}{\|k_{\lambda_{n}}\|_p}\right|^p\d x
 &\le& \Big(\frac{1+\eps}{2\pi c_p}\Big)^p\int_{\Gamma} 
 \frac{(\Im \lambda_{n})^{p-1}}{|x-\lambda_{n}|^{p}} \d x
 \nonumber\\
 &\le&C_1\int_{\Gamma\cap I_{n}^{a}} 
 \frac{y_{n}^{p-1}}{|x-\lambda_{n}|^{p}} \d x
 +C_1\int_{\R\setminus  I_{n}^{a}} 
 \frac{y_{n}^{p-1}}{|x-\lambda_{n}|^{p}} \d x \nonumber\\
 &\le& C_1\int_{\Gamma\cap I_{n}^{a}} 
 \frac{y_{n}^{p-1}}{|x-\lambda_{n}|^{p}} \d x
 +C_1\int_{\R\setminus  \tilde{I}_{n}} 
 \frac{y_{n}^{p-1}}{|x-\lambda_{n}|^{p}} \d x.
\end{eqnarray}
We start estimating the second integral in \eqref{eq2}.
In order to do so, we make the change of variable
$u=\big(|x-x_{n}|/y_{n}\big)^{p}$, so that
\[
 \d u=\frac{p|x-x_{n}|^{p-1}}{y_{n}^{p}}\d x.
\]
With this change of variable, and equivalence of $\ell^p$ and $\ell^2$-norms in a 
two-dimensional vector space, we get
\begin{eqnarray*}
\int_{\R\setminus  \tilde{I}_{n}} \frac{y_{n}^{p-1}}
  {|x-\lambda_{n}|^p} \d x
 &\le& 2\int_{\R\setminus  \tilde{I}_{n}} \frac{y_{n}^{p-1}}{|x-x_{n}|^p+
 y_{n_{\delta}}^p} \d x
 =\frac{2}{p}\int_{|u|\ge (b/\alpha)^p}\frac{1}{1+|u|}\frac{\d u}{|u|^{1-1/p}}\\
 &\le&\frac{4}{p}\int_{u\ge (b/\alpha)^p}\frac{\d u}{u^{2-1/p}}\\
 &=&\frac{4}{p-1}\left(\frac{a-2\alpha}{2\alpha}\right)^{p-1}.
\end{eqnarray*}

In particular, we may choose 
\[
 a\ge 2\alpha\left[\left(\frac{p-1}{8CC_1}\right)^{1/(p-1)} +1\right],
\]
so that  
\begin{eqnarray}\label{defa}
 C_1\int_{\R\setminus  \tilde{I}_{n}} \frac{\Im \lambda_{n}^{p-1}}
  {|x-\lambda_{n}|^p} \d x\le \frac{1}{2C}.
\end{eqnarray}
Without loss of generality we can assume that $a$ is an integer.
From \eqref{eq2} and \eqref{sameq} we deduce that
\begin{equation}\label{lowereq}
  \int_{\Gamma\cap I_{n}^{a}} 
 \frac{y_{n}^{p-1}}{|x-\lambda_{n}|^{p}} \d x\ge \frac{1}{2CC_1}.
\end{equation}

\smallskip 

We will show that
there exists $\gamma_2>0$ such that
for every $n$, there is $\ell\le a$, with  
\begin{equation}\label{cond2}
 |\Gamma\cap (I_n^{\ell+1}\setminus I_n^{\ell})|\ge\gamma_2 |I_n|.
\end{equation}
Once this is established we can deduce condition \eqref{denscond}.
Indeed, this follows from the following estimate:
\[
 |\Gamma\cap I_n^a|\ge|\Gamma\cap I_n^{\ell+1}\setminus I_n^{\ell}|\ge\gamma_2 |I_n|
 =\frac{\gamma_2}{a}a|I_n|,
\]
which yields \eqref{denscond} with $\gamma=\gamma_2/a$.

\smallskip

It thus remains to prove \eqref{cond2}.
Suppose to the contrary that
for every $\delta>0$, there is $n_{\delta}$ such that for every $\ell\le a-1$ we have
\begin{eqnarray}\label{condneg}
 |\Gamma\cap (I_{n_{\delta}}^{\ell+1}\setminus I_{n_{\delta}}^{\ell})|\le\delta |I_{n_{\delta}}|.
\end{eqnarray}
Recall that $|\lambda_{n_\delta}-t_{n_{\delta}}|
\le \alpha |I_{n_{\delta}}|$. 

Putting $I_n^0=\emptyset$,
we will now discuss the first integral in \eqref{eq2}. 
\begin{eqnarray*}
\int_{\Gamma\cap I_{n_{\delta}}^a} \frac{y_{n_{\delta}}^{p-1}}{|x-\lambda_{n_{\delta}}|^p} \d x
 &=&\sum_{\ell=0}^{a-1}\int_{\Gamma\cap (I_{n_{\delta}}^{\ell+1}\setminus I_{n_{\delta}}^{\ell})}
  \frac{y_{n_{\delta}}^{p-1}}{|x-\lambda_{n_{\delta}}|^p} \d x\\
 &\le& 2\sum_{\ell=0}^{a-1}\int_{\Gamma\cap (I_{n_{\delta}}^{\ell+1}\setminus I_{n_{\delta}}^{\ell})}
  \frac{y_{n_{\delta}}^{p-1}}{(x-x_{n_{\delta}})^p+y_{n_{\delta}}^p} \d x.
\end{eqnarray*}
Let now $u_{\ell}$ be the closest point of $I_{n_{\delta}}^{\ell+1}\setminus I_{n_{\delta}}^{\ell}$ to
$x_{n_{\delta}}$. Then
\begin{eqnarray*}
\int_{\Gamma\cap I_{n_{\delta}}^a} \frac{y_{n_{\delta}}^{p-1}}{|x-\lambda_{n_{\delta}}|^p} \d x
 &\le& 2\sum_{\ell=0}^{a-1}
 \frac{y_{n_{\delta}}^{p-1}}{(u_{\ell}-x_{n_{\delta}})^p+y_{n_{\delta}}^p}  
 \times |\Gamma\cap (I_{n_{\delta}}^{\ell+1}\setminus I_{n_{\delta}}^{\ell})|\\
 &\le& 2 \delta |I_{n_{\delta}}|\sum_{\ell=0}^{a-1}
 \frac{y_{n_{\delta}}^{p-1}}{(u_{\ell}-x_{n_{\delta}})^p+y_{n_{\delta}}^p} ,
\end{eqnarray*}
where we have used \eqref{condneg}.
Now, $|u_{\ell}-x_{n_{\delta}}|$ essentially behaves as $|\ell-\ell_0|\times |I_n|/2$ where 
$\ell_0$ is such that
$x_{n_{\delta}}\in I_{n_{\delta}}^{\ell_0+1}\setminus I_{n_{\delta}}^{\ell_0}$.
Then we can control the sum by twice the  sum starting in 0.
Also,
keeping in mind that $|I_n|/\alpha\le y_n\le \alpha |I_n|$,
\begin{eqnarray*}
\int_{\Gamma\cap I_{n_{\delta}}^a} \frac{y_{n_{\delta}}^{p-1}}{|x-\lambda_{n_{\delta}}|^p} \d x
 &\le& 4\delta |I_{n_{\delta}}|\sum_{\ell=0}^{a-1}  \frac{y_{n_{\delta}}^{p-1}}{(|I_{n_{\delta}}|\ell/2)^p+y_{n_{\delta}}^p}\\
 &\le&4 \delta |I_{n_{\delta}}| \sum_{\ell=0}^{a-1}  \frac{y_{n_{\delta}}^{p-1}}{|I_{n_{\delta}}|^p}
 \frac{1}{(\ell/2)^p+(y_{n_{\delta}}/|I_{n_{\delta}}|)^p}\\
 &\le&4\delta \sum_{\ell=0}^{\infty}  \alpha^{p-1}
 \frac{1}{(\ell/2)^p+\alpha^{-p}}
\end{eqnarray*}
since the last sum converges. Choosing $\delta$ small enough, we get
\begin{eqnarray*}
\int_{\Gamma\cap I_{n_{\delta}}^a} \frac{y_{n_{\delta}}^{p-1}}{|x-\lambda_{n_{\delta}}|^p} \d x
<\frac{1}{2CC_1}
\end{eqnarray*}
 contradicting \eqref{lowereq}.
\qed

\section{Background on the Baranov-Bernstein inequality and 
reverse Carleson measures for model spaces}
\label{background}

Recall that we consider 
meromorphic inner functions $\Theta$. Associated with $\Theta$ we will need two
constants. The first one comes from Baranov's result on Bernstein inequalities, and the second
one from a reverse Carleson measure result in $K_{\Theta}^p$.

\subsection{Baranov-Bernstein inequalities for model spaces} In order to state Baranov's result we need some more notation. 
Given $\eps\in (0,1)$ we 
have already introduced the sub-level set $L(\Theta,\eps)=\{z\in\C_+: |\Theta(z)|<\eps\}$.
Also, recall that for $x\in \R$, we had $d_\varepsilon(x)=\dist\bigl(x,L(\Theta,\eps)\bigr)$.

The reproducing kernel was defined in \eqref{repker}.
We now need a generalization of this.
Indeed, there is a formula for the $n$-th derivative ({\it see} \cite[Formula (2.2)]{B2} for 
general $n$ or \cite[Formula (7)]{B1} for $n=1$): for $f\in K^p_\Theta$,
$$
f^{(n)}(z)= {n! }\int_\R f(t)\, \overline{k_z(t)}^{\,n+1}\,\mbox{d}t,
$$
(observe that in this formula, $\overline{k_z(t)}^{n+1}$ is the $(n+1)$-th power of $\overline{k_z(t)}$
and not the $(n+1)$-th derivative).

\begin{theorem}[Baranov]\label{BaraBern}
Let $\Theta$ be a meromorphic inner function.
Suppose that $\varepsilon\in (0,1)$, $1<p<\infty$. Then for every $f\in K^p_\Theta$ and every $n\in\N$,
\begin{equation}
\label{eq:bb0}
 \|f^{(n)} d_\varepsilon^n \|_p 
 \leq  C(\Theta,p,\varepsilon) n! \Big(\frac{4}{\varepsilon}\Big)^{n} \|f\|_p.
\end{equation}
\end{theorem}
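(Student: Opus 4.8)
The plan is to prove Theorem~\ref{BaraBern} by reducing the estimate on $f^{(n)}d_\varepsilon^n$ at a real point $x$ to a contour integral of $f$ over a circle contained in the region where $|\Theta|$ is not too small, and then invoking the reproducing kernel formula together with a standard norm estimate. First I would fix $x\in\R$ and set $r=r(x)=\varepsilon d_\varepsilon(x)/4$ (or a similar fraction of $d_\varepsilon(x)$); the point of the choice of radius is that on the disk $D(x,r)$ one has $|\Theta|$ bounded below. Indeed, by the Schwarz--Pick inequality in $\C^+$, an inner function $\Theta$ satisfies a Lipschitz-type bound for the pseudohyperbolic metric, and since $\dist(x,L(\Theta,\varepsilon))=d_\varepsilon(x)$ means $|\Theta|\ge\varepsilon$ on $D(x,d_\varepsilon(x))$, a Harnack/Schwarz--Pick argument upgrades this to $|\Theta(z)|\ge \varepsilon/2$ (say) on the smaller disk $D(x,r)$ with $r$ comparable to $\varepsilon d_\varepsilon(x)$. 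This is the key geometric input and I expect it to be the main obstacle — getting the constants in the lower bound for $|\Theta|$ on $D(x,r)$ clean enough to produce exactly the factor $(4/\varepsilon)^n$ claimed, rather than merely some $C(\varepsilon)^n$.

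Next I would use the derivative formula $f^{(n)}(z)=n!\int_\R f(t)\,\overline{k_z(t)}^{\,n+1}\,\mathrm{d}t$ quoted just before the theorem. Estimating $|\overline{k_z(t)}^{\,n+1}|=|k_z(t)|^{n+1}$ pointwise, one has from \eqref{repker} that $|k_z(t)|\le \frac{1}{2\pi}\frac{1+|\Theta(z)|}{|t-\bar z|}\le \frac{1}{\pi|t-\bar z|}$ for $z\in\C^+$; but to capture the gain $d_\varepsilon^n$ I would instead work with $z=x+i\eta$ for a suitable small $\eta\simeq r$ just inside $\C^+$, or argue on the real line directly using that $K^p_\Theta$ functions extend. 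The cleaner route is probably: bound $\|f^{(n)}d_\varepsilon^n\|_p$ by duality, i.e.\ test against $g\in L^{p'}$ with $\|g\|_{p'}=1$, write $\int f^{(n)}(x)d_\varepsilon(x)^n g(x)\,\mathrm dx = n!\int\!\!\int f(t)\overline{k_x(t)}^{\,n+1}d_\varepsilon(x)^n g(x)\,\mathrm dt\,\mathrm dx$, and then apply Schur's test / generalized Young inequality on the kernel $K(t,x)=d_\varepsilon(x)^n|k_x(t)|^{n+1}$. The integrability is governed by $\int_\R |k_x(t)|^{n+1}\,\mathrm dt$, which behaves like $d_\varepsilon(x)^{-n}$ up to a constant depending on $p$ and $n$ only through a convergent integral $\int (1+|u|)^{-(n+1)}\,\mathrm du$ — this is where the $d_\varepsilon^n$ cancels.

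So the key steps, in order, are: (1) the Schwarz--Pick lower bound $|\Theta|\gtrsim\varepsilon$ on a disk of radius $\simeq \varepsilon d_\varepsilon(x)$ about each $x\in\R$; (2) the pointwise bound on $|k_x(t)|$ and the computation $\int_\R |k_x(t)|^{n+1}\mathrm dt \le C(p,\varepsilon)\,d_\varepsilon(x)^{-n}\,(4/\varepsilon)^{n}$ — more precisely splitting the integral into $|t-x|\le d_\varepsilon(x)$ where one uses $|1-\overline{\Theta(x)}\Theta(t)|$ small via step (1), and $|t-x|>d_\varepsilon(x)$ where one uses the crude bound $|k_x(t)|\le 1/(\pi|t-x|)$; (3) a Schur-test / Minkowski-integral-inequality argument combining the two variables to pass from the pointwise kernel bounds to the $L^p\to L^p$ bound, absorbing $n!$ from the derivative formula. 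The hard part, as noted, is keeping the exponential base at exactly $4/\varepsilon$: this forces the radius in step~(1) to be $\varepsilon d_\varepsilon(x)/4$ and requires the Schwarz--Pick estimate to be applied sharply, which is presumably why the authors say they need "a slightly more precise form than that given in \cite{B2}."
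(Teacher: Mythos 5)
Your overall skeleton --- the derivative formula $f^{(n)}(x)=n!\int_\R f(t)\,\overline{k_x(t)}^{\,n+1}\,\mathrm{d}t$, a near/far splitting at scale $d_\varepsilon(x)$, the crude bound $|k_x(t)|\le 1/(\pi|t-x|)$ far away and a bound of order $(\varepsilon d_\varepsilon(x))^{-1}$ nearby --- is indeed the paper's. But both of your key mechanisms fail as stated. For the near field, a \emph{lower} bound on $|\Theta|$ obtained from Schwarz--Pick is not the right input: Schwarz--Pick degenerates on $\R$ (it controls $|\Theta'(z)|$ by $1/\Im z$), and in any case $|\Theta|\ge\varepsilon$ on $D(x,d_\varepsilon(x))\cap\C^+$ is immediate from the definition of $d_\varepsilon$ and says nothing about $|1-\overline{\Theta(x)}\Theta(t)|$ being small --- on $\R$ both $\Theta(x)$ and $\Theta(t)$ are unimodular, so what is needed is a Lipschitz bound on $\Theta$ along $\R$. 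The paper gets it by Schwarz reflection of $\Theta$ across $\R$ (legitimate since $\Theta$ is meromorphic inner), the bound $|\widetilde\Theta|\le 1/\varepsilon$ on the \emph{full} disk $D(x,d_\varepsilon(x)/2)$ (the radius is $d_\varepsilon(x)/2$, not $\varepsilon d_\varepsilon(x)/4$), and the maximum principle applied to the difference quotient $h_x(\zeta)=(\widetilde\Theta(\zeta)-\Theta(x))/(\zeta-x)$; since $|k_x(t)|=\frac{1}{2\pi}|h_x(t)|$, this yields $|k_x(t)|\le \frac{2}{\pi\varepsilon d_\varepsilon(x)}$ for $|t-x|<d_\varepsilon(x)/2$ and is exactly where the base $4/\varepsilon$ comes from.

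The more serious gap is step (3): a Schur test (equivalently your duality/Young argument) cannot close the far-field estimate. With only $|k_x(t)|\le 1/(\pi|t-x|)$, the kernel $K(t,x)=d_\varepsilon(x)^n|k_x(t)|^{n+1}\mathbf{1}_{\{|t-x|\ge d_\varepsilon(x)/2\}}$ satisfies $\sup_x\int K(t,x)\,\mathrm{d}t<\infty$ but in general not $\sup_t\int K(t,x)\,\mathrm{d}x<\infty$: when $d_\varepsilon(x)\simeq 1+|x|$ (the example of Remark \ref{rem2}) one finds $K(0,x)\simeq 1/|x|$ for large $|x|$, so the $x$-integral diverges. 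This is structural, not a technicality: the two Schur conditions would give $L^\infty$- and $L^1$-boundedness and hence all $1\le p\le\infty$, whereas the operators here are bounded only for $1<p<\infty$ --- which is precisely the hypothesis of the theorem. The paper's fix is to write, on the far region, $d_\varepsilon(x)^n/|t-x|^{n+1}\le 2^n\,h(x)/|t-x|^2$ with $h=d_\varepsilon/2$, reducing to $\widetilde{I_1}f(x)=h(x)\int_{|t-x|\ge h(x)}|f(t)|\,|t-x|^{-2}\,\mathrm{d}t$, whose $L^p$-boundedness for $p>1$ (quoted from \cite[Theorem 3.1]{B1}; it is also dominated pointwise by the Poisson/Hardy--Littlewood maximal function) is the tool you are missing; the near part is likewise dominated by $\frac{2}{\varepsilon}(4/\varepsilon)^n Mf(x)$ with $M$ the maximal operator. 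So replace ``Schwarz--Pick lower bound'' by ``reflection plus maximum principle on the difference quotient,'' and ``Schur's test'' by the maximal-function (or $\widetilde{I_1}$) bound.
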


The statement given here is a slightly more precise quantitative version
of Baranov's  Bernstein inequality (see \cite[Theorem 1.5]{B2})
and  its proof is largely similar to that of \cite{B1,B2}.
We shall reproduce Baranov's 
argument below in order to get the right dependence on $n$ of the constant.

\begin{proof}  Let $f\in  K^p_\Theta$,  $x\in \R$, and write 
\begin{eqnarray*}
\frac{1}{n!} d_\varepsilon^n(x) f^{(n)}(x)&= &d_\varepsilon^n(x) \int_\R f(t)\,\overline{k_x(t)}^{\,n+1}\,\mbox{d}t\\
&=&I_1f(x)+I_2f(x),
\end{eqnarray*}
where 
\begin{eqnarray*}
I_1f(x)&=& d_\varepsilon^n(x) \int_{|t-x|\geq d_\varepsilon(x)/2 } f(t)\,\overline{k_x(t)}^{\,n+1} \,\mbox{d}t,\\
I_2f(x)&=&d_\varepsilon^n(x) \int_{|t-x|< d_\varepsilon(x)/2 }  f(t)\,\overline{k_x(t)}^{\,n+1} \,\mbox{d}t.
\end{eqnarray*}

Put $h(x)=d_\varepsilon(x)/2$. We have 
$$
|I_1f(x)|\leq {2^{n+1}}d_\varepsilon^n(x) \int_{|t-x|\geq d_\varepsilon(x)/2 } \frac{|f(t)|}{|t-x|^{n+1}}\,\mbox{d}t
\leq 2 \cdot 4^{n} \times\widetilde{I_1}f(x)
$$
where
$$
\widetilde{I_1}f(x)=h(x) \int_{|t-x|\geq h(x) } \frac{|f(t)|}{|t-x|^{2}}\,\mbox{d}t.
$$
According to \cite[Theorem 3.1]{B1} $\widetilde{I_1}$ is a bounded operator from $L^p$ to $L^p$ for $p>1$.
Therefore, there exists $c_1(\Theta,\eps,p)$ such that
\begin{equation}
\label{eq:bb1}
\norm{I_1 f}_{L^p}\leq c_1(\Theta,\eps,p) 4^n\norm{f}_{L^p}.
\end{equation}

\smallskip

Let us now estimate $I_2 f$. Since $\Theta$ is a meromorphic inner function, $\Theta$ admits an analytic continuation across $\R$.

By the Schwarz Reflection Principle, 
$$
\widetilde{\Theta}(\zeta)=\left\{
\begin{array}{lll}
\Theta (\zeta)& \text{ if } &\Im \zeta\geq 0,\\
&&\\
1/\overline{\Theta(\overline{\zeta})}& \text{ if }& \Im \zeta\leq 0
\end{array} 
\right.
$$
(in the latter case $\Im \zeta$ has to be sufficiently close to 0 to avoid the poles of $\Theta$).
Let $D_x=D\bigl(x,d_\varepsilon(x)/2\bigr)$ the disc of radius $d_\varepsilon(x)/2$ centered at $x$.
We put   
$$
h_x(\zeta):=\left\{
\begin{array}{lll}

\displaystyle \frac{\widetilde{\Theta}(\zeta)-\Theta(x)}{\zeta-x}&\mbox{if }&\zeta\in D_x\setminus\{x\},\\
&&\\
\Theta'(x)&\mbox{if }&\zeta=x. 
\end{array} 
\right.
$$ 
The function $h_x$ is well defined and analytic.
From the Maximum Principle, we deduce that 
$$
|h_x(\zeta)|\leq \sup_{\zeta\in  \partial D_x}|h_x(\zeta) |\leq 2\frac{1+{1}/{\varepsilon}}{d_\varepsilon(x)}\leq \frac{4}{\varepsilon d_\varepsilon(x)}.
$$ 
It follows that 
\begin{eqnarray*}
I_2f(x)&\leq &d_\varepsilon^n(x) \int_{|t-x|< d_\varepsilon(x)/2 }  |f(t) ||h_x(t)|^{n+1}\,\mbox{d}t\\
&\leq & \frac{2}{\varepsilon} \Big(\frac{4}{\varepsilon}\Big)^n 
\frac{2}{d_\varepsilon(x)} \int_{|t-x|< d_\varepsilon(x)/2 }  |f(t) |\,\mbox{d}t
=\frac{2}{\varepsilon} \left(\frac{4}{\varepsilon}\right)^n Mf(x)
\end{eqnarray*}
where $M\ffi(x)=\sup_{r>0}\dst\frac{1}{2r}\int_{x-r}^{x+r}\ffi(t)\,\mbox{d}t$ is the Hardy-Littlewood maximal operator (recall that $x$ is real). 
Since $M$  is bounded from $L^p$ to $L^p$, we obtain a constant $C(p)$ such that
\begin{equation}
\label{eq:bb2}
\norm{I_2 f}_{L^p}\leq \frac{C(p)}{\varepsilon} \left(\frac{4}{\varepsilon}\right)^n\norm{f}_{L^p}.
\end{equation} 
As a result 
$ \|f^{(n)} d_\varepsilon^n \|_p \leq\dst\frac{n!}{2\pi}(\norm{I_1 f}_{L^p}+\norm{I_2 f}_{L^p})$, and
\eqref{eq:bb1}-\eqref{eq:bb2} imply \eqref{eq:bb0}.
\end{proof}

\subsection{Carleson and sampling measures for model spaces} 
The second result which will be important in this paper concerns reverse Carleson measures for
model spaces. Recall  that the Carleson window of an interval $I$ is given by
\[
 S(I)=\{z=x+iy\in \C_+:x\in I,0<y<|I|\}.
\]

We need the following result about reserve Carleson measures for $K_\Theta^p$,  {\it see} \cite{BFGHR} for the case $p=2$ and 
\cite{FHR} for the general case $p>1$ (and which works without requiring the Carleson measure
condition).

\begin{theorem}[Blandign\`eres {\it et al}]\label{BFGHR}
Let $\Theta$ be an inner function and $\eps>0$.
Let $\mu \in M_{+}(\R\cup \C^+)$.
Then there exists an $N_0 = N_0(\Theta, \varepsilon) > 1$ such that 
if
\begin{equation}\label{eq:reversed-condition-first}
\inf_{I}\frac{\mu(S(I))}{|I|}>0,
\end{equation}
where the infimum is taken over all intervals $I \subset \R$ with 
$$
S\left(I^{N_{0}}\right)\cap L(\Theta,\varepsilon)\neq\varnothing,
$$
then,  for every $f\in  K_{\Theta}^p$,

\begin{equation}\label{eq:reversed-equation}
\int_{\R}|f|^p\d x\leq C(\Theta,\mu,\varepsilon)\int_{\R\cup C^+}|f|^p\d\mu.
\end{equation}
\end{theorem}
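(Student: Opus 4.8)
\emph{Proof strategy.} The plan is to reduce, via a Whitney decomposition adapted to the sublevel set, to a uniform \emph{local} reverse estimate, and then to patch these together.

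First I would fix a Baranov sequence $(I_n)_{n\in\Z}$ attached to $L(\Theta,\varepsilon)$, so that $|I_n|\simeq d_\varepsilon(x)$ for $x\in I_n$ by \eqref{defBara}--\eqref{BarConst}. If $(a,b)\in\overline{L(\Theta,\varepsilon)}$ is a point closest to the centre $t_n$ of $I_n$, then $|t_n-a|\le d_\varepsilon(t_n)\simeq|I_n|$ and $0\le b\le d_\varepsilon(t_n)\simeq|I_n|$, so $S(I_n^{N_0})$, and any fixed dilate of it, meets $L(\Theta,\varepsilon)$ as soon as $N_0$ exceeds a suitable multiple of the Baranov constant $\alpha$; note that $\alpha$, hence $N_0$, depends only on $\Theta$ and $\varepsilon$. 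Consequently the hypothesis \eqref{eq:reversed-condition-first} applies to all these intervals and yields a uniform $\kappa=\kappa(\Theta,\mu,\varepsilon)>0$ with $\mu(S(I_n))\ge\kappa|I_n|$ for every $n$. Since the $I_n$ partition $\R$ we have $\int_{\R}|f|^p\d x=\sum_n\int_{I_n}|f|^p\d x$, and it suffices to bound each $\int_{I_n}|f|^p\d x$ by $C(\Theta,\varepsilon)\kappa^{-1}\int_{S(I_n)}|f|^p\d\mu$ modulo a remainder whose sum over $n$ is a small multiple of $\|f\|_p^p$.

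The heart of the matter is this local reverse estimate, and it is where the model space structure is used. Near $I_n$ one has at one's disposal the subharmonicity of $|f|^p$ on $\C^+$ and a point $\lambda_n\in L(\Theta,\varepsilon)$ with centre within $\alpha|I_n|$ of $t_n$ and $\Im\lambda_n\simeq|I_n|$, at which the reproducing kernel is non-degenerate because $|\Theta(\lambda_n)|<\varepsilon$: the computation following \eqref{repker} gives $\|k_{\lambda_n}\|_p^p\simeq(\Im\lambda_n)^{1-p}\simeq|I_n|^{1-p}$, with $|k_{\lambda_n}|^p\simeq\|k_{\lambda_n}\|_p^p/|I_n|$ on a box comparable to $S(I_n)$. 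The Baranov--Bernstein inequality (Theorem \ref{BaraBern}) controls the weighted derivatives $\|f^{(k)}d_\varepsilon^k\|_p$, hence---through the Taylor-coefficient argument employed for the main theorem---the oscillation of $f$ on a large portion of $I_n$ relative to $\bigl(|I_n|^{-1}\int_{I_n}|f|^p\d x\bigr)^{1/p}$. Combining this with the subharmonicity of $|f|^p$ and the identity $f(\lambda_n)=\langle f,k_{\lambda_n}\rangle$ shows that the average $|I_n|^{-1}\int_{I_n}|f|^p\d x$ is comparable to the values of $|f|^p$ on a large portion of $S(I_n)$; feeding in $\mu(S(I_n))\ge\kappa|I_n|$ converts this into the desired lower bound for $\int_{S(I_n)}|f|^p\d\mu$. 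Equivalently, one first proves the reverse reproducing-kernel thesis $\inf_{\lambda\in L(\Theta,\varepsilon)}\|k_\lambda\|_{L^p(\mu)}/\|k_\lambda\|_p>0$, which is immediate from \eqref{eq:reversed-condition-first} and the concentration of $|k_\lambda|^p$, and then upgrades it to the embedding---by Hilbert-space frame arguments when $p=2$ and by a Remez-type inequality on subintervals of $I_n$ for general $p$, the mechanism used elsewhere in this paper.

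Finally I would sum over $n$. The boxes $S(I_n)$ are pairwise disjoint and neighbouring $|I_n|$ are comparable, while the fast off-diagonal decay $|k_{\lambda_n}(x)|^p\simeq(\Im\lambda_n)^{p-1}|x-\lambda_n|^{-p}$ makes the remainders summable with arbitrarily small total mass; this is then absorbed into the left-hand side, giving $\int_{\R}|f|^p\d x\le C(\Theta,\mu,\varepsilon)\int_{\R\cup\C^+}|f|^p\d\mu$. I expect the main obstacle to be the local reverse estimate itself: reverse embeddings are genuinely more delicate than ordinary Carleson embeddings, since a point mass of $\mu$ located in $\C^+$ is weak in the sense that an $f\in K_\Theta^p$ may vanish exactly there, so one cannot argue pointwise and must instead exploit quantitatively the rigidity of $K_\Theta^p$---finite effective bandwidth through Bernstein, non-vanishing through proximity to $L(\Theta,\varepsilon)$. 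This is precisely what is carried out in \cite{BFGHR,FHR}.
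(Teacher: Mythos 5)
A preliminary remark: the paper offers no proof of this statement. It is imported as a black box from \cite{BFGHR} (for $p=2$) and \cite{FHR} (for general $p>1$), so there is no internal argument to compare yours with; a complete proof would have to reproduce the content of those references.

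Your sketch has a genuine gap, and it occurs already at the first reduction. You replace the hypothesis \eqref{eq:reversed-condition-first}, which quantifies over \emph{all} intervals $I$ with $S(I^{N_0})\cap L(\Theta,\varepsilon)\neq\varnothing$ --- in particular over intervals of length down to about $d_\varepsilon/N_0$ placed at arbitrary positions --- by the countable family of inequalities $\mu\bigl(S(I_n)\bigr)\ge\kappa|I_n|$ for the Baranov intervals alone, and every subsequent step uses only the latter. This reduced hypothesis is blind to where the mass of $\mu$ sits inside each box $S(I_n)$, and it does not imply the conclusion. Concretely, take $\Theta(z)=e^{2\pi iz}$, so $K_\Theta^2\cong PW_\pi^2$ and all Baranov intervals have the same length $h=c\,d_\varepsilon=c\ln(1/\varepsilon)/(2\pi)$; choosing $\varepsilon$ small makes $h>1$. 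The measure $\mu=\sum_n h\,\delta_{t_n+ih/2}$, with $t_n$ the centre of $I_n$, satisfies $\mu(S(I_n))=|I_n|$ for every $n$, yet the reverse inequality for this $\mu$ would say that the separated sequence $\{t_n+ih/2\}$, whose density $1/h$ is strictly below the critical density $1$, is sampling for $PW_\pi^2$ --- impossible by Landau's necessary density condition. The full hypothesis (with $N_0$ large) rejects this $\mu$, since $\mu(S(I))=0$ for short subintervals $I\subset I_n$ avoiding $t_n$; your reduction discards exactly this information. The same example exposes the second gap: your ``local reverse estimate'' rests on the claim that $|I_n|^{-1}\int_{I_n}|f|^p\d x$ is comparable to the values of $|f|^p$ on a \emph{large portion} of $S(I_n)$, but even granting that, it gives no lower bound on $\int_{S(I_n)}|f|^p\d\mu$ unless one knows that $\mu$ actually charges that portion --- which only the full, multi-scale hypothesis can guarantee, and which is the real content of the theorem. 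Your closing deferral to ``precisely what is carried out in \cite{BFGHR,FHR}'' concedes this point. Finally, the alternative route via the testing condition $\inf_\lambda\|k_\lambda\|_{L^p(\mu)}/\|k_\lambda\|_{p}>0$ is not an available shortcut: for reverse embeddings of model spaces this reproducing-kernel condition is necessary but is known from \cite{BFGHR} not to be sufficient in general.
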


The smallest possible constant in \eqref{eq:reversed-equation} will be called {\it reverse Carleson
constant} (it correspond exactly to the sampling constant when $\d\mu=\chi_{\Gamma}\d m$, where
$\chi_{\Gamma}$ is the characteristic function of $\Gamma$).
\\

The theorem above will allow us to make the first reduction we mentioned in the introduction. 
Indeed, it will imply that the sampling constants of certain reference sets  
denoted by $F^{a,\sigma}_0$ in the theorem below are uniform. \\

From now on, we will fix an integer 
\begin{equation}
\label{est:N}
N\ge \max\left((1+\alpha)\sqrt{2}N_0,\frac{40\times 8^{1/p} \alpha }{\varepsilon}\right),
\end{equation}
where $\alpha$ is the Baranov constant from \eqref{BarConst} 
and $N_0$ is given in Theorem \ref{BFGHR}.\\

As discussed earlier, in order to use the Remez-type inequality, we need intervals which
are sufficiently small. For this reason we need to subdivide (uniformly) the Baranov intervals (or their 
amplified companions $I_n^a$).  This will be done now.
Let us start from a fixed Baranov sequence $(I_n)_n$, and partition
\[
 I_n^a=\bigcup_{k=1}^{aN}I^a_{n,k}
\]
where the $I^a_{n,k}$'s are intervals of length $|I^a_{n,k}|=|I^a_n|/(aN)=|I_n|/N$. (With no loss of generality, we may increase $a$ in such a way that $aN$ is an integer).
For an application $\sigma: \N \to\{1,2,\ldots,aN\}$,  set $I_n^{a,\sigma}=
I^a_{n,\sigma(n)}$. Our aim is to compare these intervals with the unamplified 
intervals $I_{k,l}:=I^1_{k,l}$. To this end, for a given $\sigma$, we define 
\[
 A_n^{\sigma}=\{(k,l):I_{k,l}\cap I^{a,\sigma}_n\neq \emptyset\},
\]
so that $A_n^{\sigma}$ is the smallest index set for which
\[
 I_n^{a,\sigma}\subset \bigcup_{(k,l)\in A_n^{\sigma}}I_{k,l}.
\]

\begin{figure}
\includegraphics{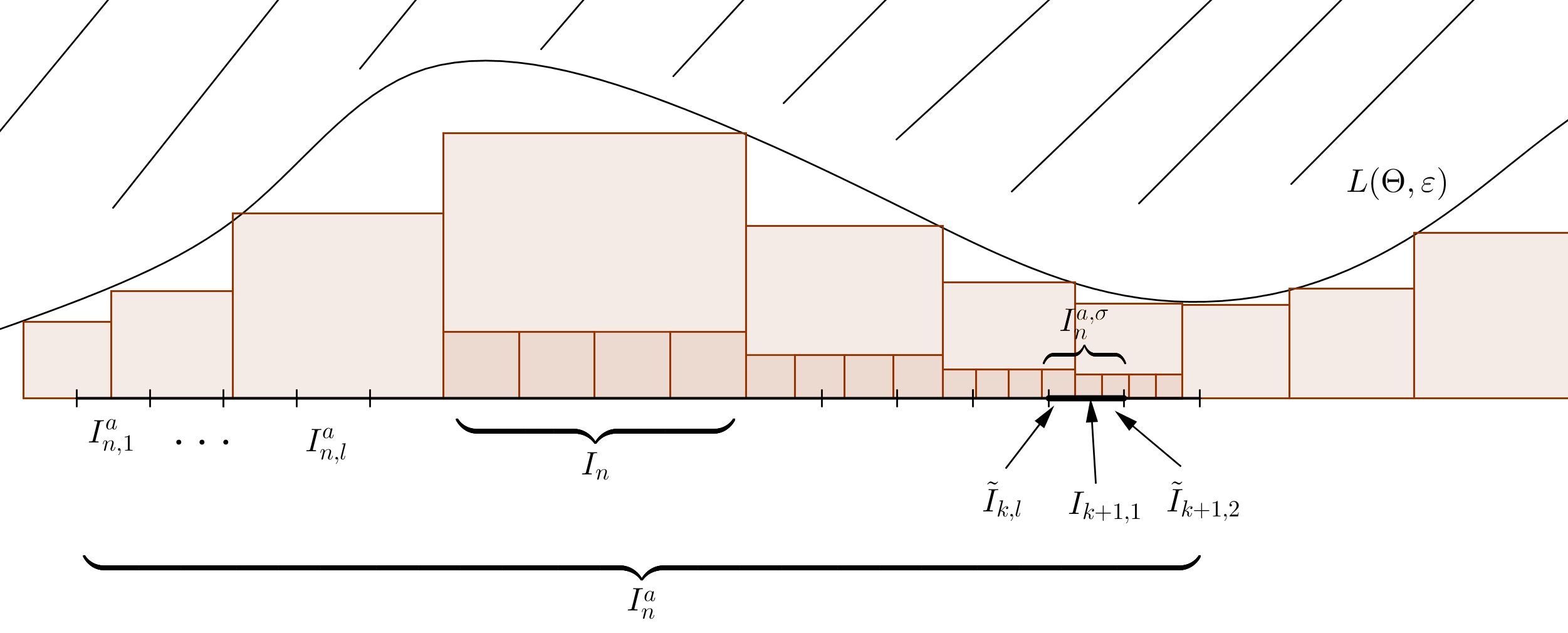}
\caption{Intervals $I_n^a$, $I_{n,k}^a$, $I_n^{a,\sigma}$, $\tilde{I}_{k,l}$}
\end{figure}

We will also use the following notation for $(k,l)\in A_n^{\sigma}$:
\[
 \tilde{I}_{k,l}=
 I_{k,l}\cap I_n^{a,\sigma} 
\]
so that $I_n^{a,\sigma}=\bigcup_{(k,l)\in A_n^{\sigma}}\tilde{I}_{k,l}$.
Then consider the set
\[
 F^{a,\sigma}=\bigcup_{n}
 \left(\bigcup_{(k,l)\in A_n^{\sigma}}\tilde{I}_{k,l}
 \right).
\]

\begin{theorem}\label{thm1}
Let $\Theta$ be an inner function and $1<p<\infty$. Let $\eps>0$, $N_0=N_0(\Theta,\eps)$ as in Theorem \ref{BFGHR}, and let $N$ be as in \eqref{est:N}. 
Suppose $\sigma:\N\to \{1,\ldots,aN\}$.

Given $0<\eta<1$. 
If for every $n$ we choose $A^0_n\subset A^{\sigma}_n$ in such a way that
\begin{eqnarray}\label{dens}
 |\bigcup_{(k,l)\in A^0_n}\tilde{I}_{k,l}|\ge \eta  |I_n^{a,\sigma}| 
 \end{eqnarray}
then 
\[
  F^{a,\sigma}_0=\bigcup_{n}
 \left(\bigcup_{(k,l)\in A_n^{0}}\tilde{I}_{k,l}
 \right)
\]
is uniformly dominating meaning that
there exists a constant $C=C(\Theta,\alpha, p,\varepsilon)$, 
independant of $\sigma$, $\eta $, $a$ and the choice of $A_n^0$ with
\eqref{dens}, such that 
for every $f\in K^p_{\Theta}$, we have
\begin{eqnarray}\label{normequiv}
\int_{F^{a,\sigma}_0}|f(t)|^p\,\mathrm{d}t\le \|f\|_{L^p(\R)}^p\le 
 { e^{C \frac{a^2}{\eta }}}
 \int_{F^{a,\sigma}_0}|f(t)|^p\,\mathrm{d}t.
\end{eqnarray}
\end{theorem}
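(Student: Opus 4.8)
The plan is to derive \eqref{normequiv} from the reverse Carleson measure result, Theorem \ref{BFGHR}, applied to the measure $\d\mu = \chi_{F^{a,\sigma}_0}\d m$, together with a reduction that uses the Bernstein inequality (Theorem \ref{BaraBern}) and a Remez-type inequality on the small subintervals $\tilde I_{k,l}$. The left inequality in \eqref{normequiv} is trivial since $F^{a,\sigma}_0\subset\R$. For the right inequality, the first step is to show that the intervals in the construction, namely the $I_{k,l}$ (and their intersections $\tilde I_{k,l}$), satisfy a Carleson-window condition that makes Theorem \ref{BFGHR} applicable. Concretely, I would check that because each $I_{k,l}$ has length $|I_k|/N$, and because of the lower bound $|I_n|/\alpha\le d_\eps\le\alpha|I_n|$ on $I_n$ from \eqref{BarConst}, a window $S(I_{k,l}^{N_0})$ that meets $L(\Theta,\eps)$ forces $|I_{k,l}|$ to be comparable to the Baranov length there; the choice of $N$ in \eqref{est:N} (in particular the factor $(1+\alpha)\sqrt2 N_0$) is exactly what guarantees that the relevant windows of the small intervals reach the sublevel set only when they have the ``correct'' scale, so that the infimum condition \eqref{eq:reversed-condition-first} holds with a bound depending only on $\Theta,\eps,\alpha$. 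This yields a constant $C_0=C_0(\Theta,\alpha,p,\eps)$, independent of $\sigma$, $a$, $\eta$, such that $\|f\|_{L^p(\R)}^p\le C_0\int_{F^{a,\sigma}}|f|^p\d m$, where $F^{a,\sigma}=\bigcup_n\bigcup_{(k,l)\in A^\sigma_n}\tilde I_{k,l}$ is the \emph{full} reference set (no thinning by $A^0_n$ yet). This is the step I expect to be the main obstacle: making the window/scale bookkeeping precise enough to extract a constant that does not blow up, and correctly locating the $\tilde I_{k,l}$ relative to the amplified intervals $I_n^{a,\sigma}$ across different $n$.

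The second step is to pass from $F^{a,\sigma}$ to the thinned set $F^{a,\sigma}_0$, which is where the factor $e^{Ca^2/\eta}$ enters. Fix $n$; on the single amplified interval $I_n^{a,\sigma}$ of length $|I_n|/N$ we have, by hypothesis \eqref{dens}, that $A^0_n$ covers a proportion $\eta$ of it. Following Kovrijkine's scheme, the plan is to use the Bernstein inequality to find, inside $I_n^{a,\sigma}$ (or a fixed neighbourhood of it), a point where $|f|$ and all its derivatives are controlled: since $\|f^{(j)}d_\eps^j\|_p\le C(\Theta,p,\eps)j!(4/\eps)^j\|f\|_p$ and $d_\eps\simeq|I_n|$ on $I_n$, averaging over $I_n^{a,\sigma}$ produces a point $x_0$ with $|f^{(j)}(x_0)|\le C\,(\text{something})^j j!\,|I_n|^{-j}\,M_n$, where $M_n^p\simeq |I_n|^{-1}\int_{cI_n}|f|^p\,\d m$ is the local $L^p$-average over a controlled enlargement of $I_n$. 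Expanding $f$ in Taylor series around $x_0$ then shows that on $I_n^{a,\sigma}$ the function $|f|$ is pointwise bounded by $C\cdot A_n\cdot M_n$ for an explicit ``uniform'' constant $A_n$ (the series converges because $|I_n^{a,\sigma}|/|I_n| = 1/N$ is small relative to $\eps$, which is exactly the other half of the condition \eqref{est:N}, the $40\cdot 8^{1/p}\alpha/\eps$ term). This is precisely the ``uniform estimate on the interval'' needed to feed the Remez inequality.

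The third step applies the Remez-type inequality on each small interval $\tilde I_{k,l}$ with $(k,l)\in A^0_n$: on such an interval $f$ is holomorphic with the uniform bound just obtained, and $\bigcup_{A^0_n}\tilde I_{k,l}$ occupies a proportion $\eta$ of $I_n^{a,\sigma}$ while (since the $\tilde I_{k,l}$ are full subintervals of $I_n^{a,\sigma}$, not sparse sets within it) the Remez constant is of polynomial–in–$1/\eta$ type up to an exponential factor in the number of intervals — the upshot being a bound of the form $\int_{I_n^{a,\sigma}}|f|^p\,\d m\le e^{C a^2/\eta}\int_{\bigcup_{A^0_n}\tilde I_{k,l}}|f|^p\,\d m$, with $C=C(\Theta,p,\eps,\alpha)$; the $a^2$ dependence comes from the fact that $I_n^{a,\sigma}$ lives inside $I_n^a$ whose amplification costs a factor growing with $a$ both in the Bernstein averaging ($M_n$ involves an enlargement proportional to $a$) and in the number of subdivisions ($aN$ of them). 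Summing over $n$, using that the enlargements $cI_n$ have bounded overlap (neighbouring Baranov intervals are comparable), gives $\int_{F^{a,\sigma}}|f|^p\,\d m\le e^{Ca^2/\eta}\int_{F^{a,\sigma}_0}|f|^p\,\d m$. Combining with Step 1 finishes the proof: $\|f\|_{L^p(\R)}^p\le C_0\int_{F^{a,\sigma}}|f|^p\,\d m\le C_0\,e^{Ca^2/\eta}\int_{F^{a,\sigma}_0}|f|^p\,\d m\le e^{C'a^2/\eta}\int_{F^{a,\sigma}_0}|f|^p\,\d m$, absorbing $C_0$ into the exponential (legitimate since $a^2/\eta\ge 1$). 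The delicate points to get right are the uniformity of the Remez/Bernstein constants across $n$ (they must only see $\Theta$ through $\eps$ and $\alpha$, via $d_\eps\simeq|I_n|$) and the bounded-overlap estimate that lets the per-$n$ bounds be summed without losing the constant.
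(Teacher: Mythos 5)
There is a genuine gap at Step 1, and it is fatal to the plan. The set $F^{a,\sigma}=\bigcup_n I_n^{a,\sigma}$ consists of a \emph{single} subinterval of length $|I_n|/N$ chosen arbitrarily inside each amplified interval $I_n^a$ of length $a|I_n|$; its density at the Baranov scale is only $\sim 1/(aN)$ and its gaps can have length $\sim a|I_n|$. The hypothesis \eqref{eq:reversed-condition-first} of Theorem \ref{BFGHR} must be verified for \emph{all} intervals $I$ with $S(I^{N_0})\cap L(\Theta,\eps)\neq\varnothing$, and these include intervals of length as small as $\sim |I_n|/N_0$, which for large $a$ can sit entirely inside a gap of $F^{a,\sigma}$. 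Hence $\inf_I |F^{a,\sigma}\cap I|/|I|=0$, and Theorem \ref{BFGHR} does not apply to $\chi_{F^{a,\sigma}}\,\d m$ at all; a fortiori it cannot yield a constant $C_0$ independent of $a$. (Already in the Paley--Wiener case, $F^{a,\sigma}$ has relative density $1/(aN)\to 0$ as $a\to\infty$, so its sampling constant must blow up with $a$; the claimed uniform domination of $F^{a,\sigma}$ is simply false.) A further obstruction is that Theorem \ref{BFGHR} is qualitative in $\mu$: even where it applies, it gives no control of $C(\Theta,\mu,\eps)$ in terms of the window density, so no quantitative bookkeeping can be extracted from it directly.

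The paper circumvents both problems by never applying Theorem \ref{BFGHR} to a restriction of Lebesgue measure to the reference set. It applies it instead to the fixed measure $\mu=\sum_n \mathbf{1}_{I_n}\,\d x\otimes\delta_{|I_n|/N}$ carried by horizontal segments above the \emph{full} Baranov intervals $I_n$; for this $\mu$ one has $\mu(S(I))=|I|$ for every tested window (Lemma \ref{lem2}), with constants independent of $\sigma$, $a$, $\eta$. The dependence on $a$ and $\eta$ then enters through a harmonic-measure lower bound $\omega_z(F_0^{a,\sigma})\ge\delta\simeq \eta/(Na)^2$ for $z$ on those segments (Lemma \ref{harmmeas}), fed into the Havin--J\"oricke/Dyakonov Jensen inequality $|f(z)|^q\le 2\bigl(\int_{F_0^{a,\sigma}}|f|^qP_z\bigr)^{\delta}\bigl(\int_\R|f|^qP_z\bigr)^{1-\delta}$, which is integrated against $\mu$ and closed with H\"older and the Carleson/reverse-Carleson properties of $\mu$; this produces $(\mathrm{const})^{1/\delta}=e^{Ca^2/\eta}$. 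Your Steps 2--3 (Bernstein estimates, Taylor expansion at good points, Remez) are essentially the machinery the paper deploys later, in Section 5, to pass from $F_0^{a,\sigma}$ to $\Gamma$; but that machinery presupposes the domination by $F_0^{a,\sigma}$ established in the present theorem (it is needed to control the local $L^p$ averages that feed the Taylor/Remez step), so invoking it here is circular. The missing idea is precisely the harmonic-measure transfer from the segments down to the thinned set.
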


Before proving this theorem, we discuss the special case $a=1$ that we need for
Corollary \ref{Cora=1}. 
In this case, given $\sigma$, we have $I_n^{a,\sigma}=I_{n,\sigma(n)}$, and hence
$A_n^{\sigma}=\{(n,\sigma(n))\}$. We also choose $A_n^0=A_n^{\sigma}$, so that in \eqref{dens}
we have $\eta=1$. Also $F_0^{\sigma}:=F_0^{1,\sigma}
=\bigcup_{n}I_{n,{\sigma(n)}}$ and we get \eqref{normequiv}
with $e^{C\frac{a^2}{\eta}}$ replaced by $e^C$. Let us state this as a separated result.

\begin{corollary}\label{casea=1}
Let $\Theta$ be an inner function and $1<p<\infty$. Let $\eps>0$, $N_0=N_0(\Theta,\eps)$ as in Theorem \ref{BFGHR}, and let $N$ be as in \eqref{est:N}. 
Suppose $\sigma:\N\to \{1,\ldots,N\}$.

Then the set 
\[
  F^{\sigma}=\bigcup_{n} I_{n,\sigma(n)}
\]
is uniformly dominating meaning that
there exists a constant $C=C(\Theta,\alpha, p,\varepsilon)$, 
independant of $\sigma$, such that 
for every $f\in K^p_{\Theta}$, we have
\begin{eqnarray}\label{normequiv1}
\int_{F^{\sigma}_0}|f(t)|^p\,\mathrm{d}t\le \|f\|_{L^p(\R)}^p\le 
 { e^{C} }
 \int_{F^{\sigma}_0}|f(t)|^p\,\mathrm{d}t.
\end{eqnarray}
\end{corollary}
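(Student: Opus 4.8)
The plan is to read off the corollary as the degenerate case $a=1$, $\eta=1$ of Theorem \ref{thm1}; no new machinery is needed. First, the left-hand inequality in \eqref{normequiv1} is immediate, since $F^\sigma\subset\R$ and hence $\int_{F^\sigma}|f(t)|^p\d t\le\int_\R|f(t)|^p\d t=\|f\|_{L^p(\R)}^p$ for every $f\in K^p_\Theta$.

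For the right-hand inequality I would apply Theorem \ref{thm1} with $a=1$, checking that every object appearing there collapses to the one in the corollary. With $a=1$ the partition $I_n^a=\bigcup_{k=1}^{aN}I^a_{n,k}$ becomes $I_n=\bigcup_{k=1}^{N}I_{n,k}$ with $|I_{n,k}|=|I_n|/N$, that is, exactly the base intervals $I_{k,l}=I^1_{k,l}$ entering the definition of $A^\sigma_n$. Hence, for a choice function $\sigma$, the interval $I_n^{1,\sigma}=I_{n,\sigma(n)}$ is itself one of these base intervals, so the only base interval meeting it is itself: $A^\sigma_n=\{(n,\sigma(n))\}$ and $\tilde I_{n,\sigma(n)}=I_{n,\sigma(n)}\cap I_n^{1,\sigma}=I_{n,\sigma(n)}$. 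Consequently $F^{1,\sigma}=\bigcup_n I_{n,\sigma(n)}=F^\sigma$.

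Next I would choose $A^0_n=A^\sigma_n=\{(n,\sigma(n))\}$ for every $n$; then $\bigcup_{(k,l)\in A^0_n}\tilde I_{k,l}=I_{n,\sigma(n)}=I_n^{1,\sigma}$, so the density hypothesis \eqref{dens} holds trivially with $\eta=1$, and $F^{1,\sigma}_0=F^{1,\sigma}=F^\sigma$. Theorem \ref{thm1} then furnishes a constant $C=C(\Theta,\alpha,p,\varepsilon)$, which by that theorem is independent of $\sigma$, $\eta$, $a$ and of the choice of the sets $A^0_n$, such that $\|f\|_{L^p(\R)}^p\le e^{Ca^2/\eta}\int_{F^{1,\sigma}_0}|f(t)|^p\d t$. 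Since $a=\eta=1$ the exponent is just $C$, and together with the trivial lower bound this is exactly \eqref{normequiv1}.

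The only point that needs attention is that the constant produced by Theorem \ref{thm1} must not secretly depend on $\sigma$ (equivalently, not on $N$ beyond its prescribed dependence on $\Theta,\alpha,p,\varepsilon$), but this is precisely what the uniformity clause in the statement of Theorem \ref{thm1} asserts, so there is no genuine obstacle. Alternatively one could argue directly in the case $a=1$ by feeding $\d\mu=\chi_{F^\sigma}\d m$ into the reverse Carleson theorem (Theorem \ref{BFGHR}) after thickening $F^\sigma$ to a reference set satisfying \eqref{eq:reversed-condition-first}, and then transferring back to $F^\sigma$ by combining the Baranov--Bernstein inequality (Theorem \ref{BaraBern}) with a Remez-type estimate on each $I_{n,\sigma(n)}$, whose length $|I_n|/N$ is, by \eqref{BarConst} and the choice \eqref{est:N} of $N$, a fixed fraction of $d_\varepsilon$ near $I_{n,\sigma(n)}$; but since Theorem \ref{thm1} already packages exactly this reasoning, the specialization above is the efficient route.
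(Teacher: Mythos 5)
Your proposal is correct and matches the paper's own treatment: the corollary is obtained exactly by specializing Theorem \ref{thm1} to $a=1$ with $A_n^0=A_n^\sigma=\{(n,\sigma(n))\}$, so that $\eta=1$, $F_0^{1,\sigma}=\bigcup_n I_{n,\sigma(n)}$, and the exponent $Ca^2/\eta$ in \eqref{normequiv} reduces to $C$. The uniformity in $\sigma$ is indeed supplied by the statement of Theorem \ref{thm1}, just as you note, so no further argument is needed.
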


\begin{figure}
\includegraphics{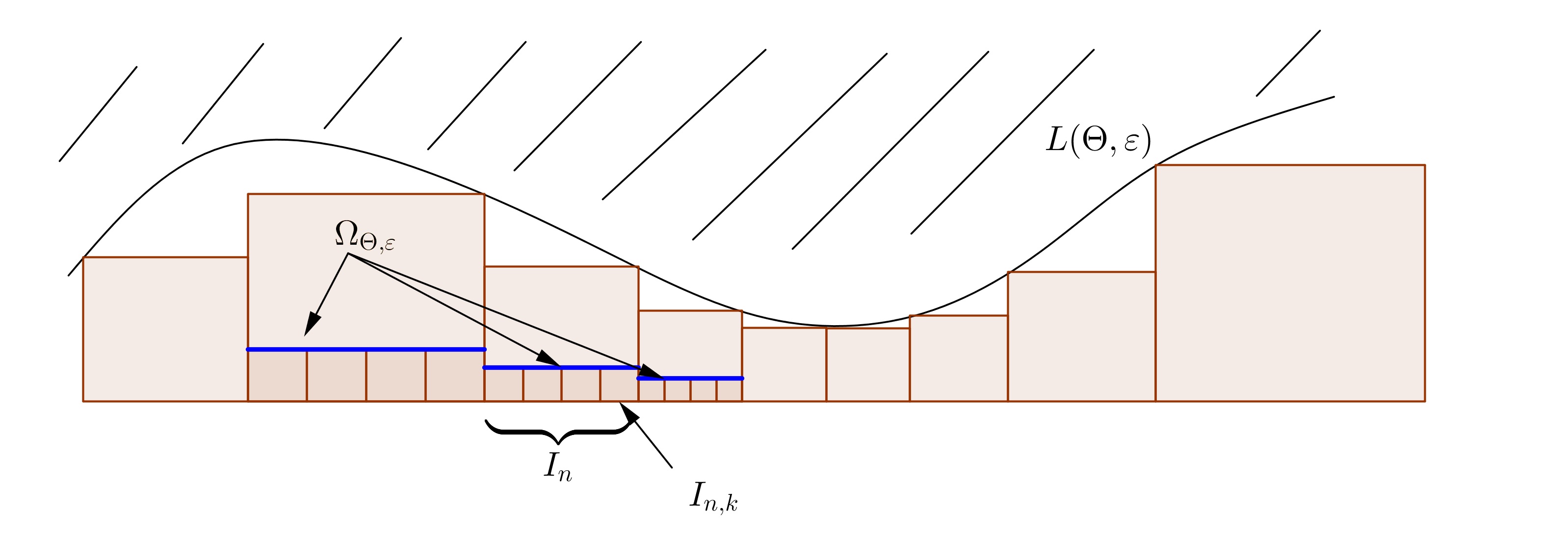}
\caption{Level sets, Baranov intervals, and $\Omega_{\Theta,\eps}$}
\end{figure}

Let us introduce the following notations (see Figure 2). 
Let  
$$
 \Omega_{\Theta,\eps}:=\bigcup_{n\in\Z}\{z=x+iy\in \C_+:x\in I_n,y=|I_n|/N,n\in\N\}.
$$
Consider the measure
\[
\mbox{d}\mu=\sum_{n\in\Z} \mathbf{1}_{I_n}\,\mbox{d}x\otimes\delta_{|I_n|/N}(y),
\]
which defines usual arc length measure on the upper edges of 
$$
\bigcup_{k\in\Z}\bigcup_{l=1,\ldots,N}S(I_{k,l}).
$$

For the proof of Theorem \ref{thm1} we need the following lemmas.

\begin{lemma}\label{lem2}
In the notation above, $\mu$ is a Carleson measure and a reverse Carleson measure.
\end{lemma}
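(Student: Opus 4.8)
The claim has two halves: $\mu$ is a Carleson measure for $K_\Theta^p$ and $\mu$ is a reverse Carleson measure for $K_\Theta^p$. I would handle them separately, the second being a direct application of Theorem \ref{BFGHR}.

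\textbf{Carleson part.} I need a uniform bound $\mu(S(J)) \le C|J|$ over all intervals $J\subset\R$. The measure $\mu$ lives on the polygonal curve $\Omega_{\Theta,\eps}$, putting arc-length mass on the segment at height $|I_n|/N$ above each Baranov interval $I_n$. Fix an interval $J$. The key geometric input is the Baranov property \eqref{BarConst}: for $x\in I_n$ one has $|I_n|\simeq d_\eps(x)\simeq \dist(x, L(\Theta,\eps))$, so the height $|I_n|/N$ of the relevant segment is comparable (up to the fixed constant $\alpha N$) to $|I_n|$ itself, and neighboring Baranov intervals have comparable length. Thus the portion of $\Omega_{\Theta,\eps}$ meeting the Carleson window $S(J)$ comes from indices $n$ with $I_n$ roughly inside $J$ (those with $|I_n|/N < |J|$ and $I_n$ near $J$). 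For those, $\mu$ restricted to $S(J)$ is at most $\sum_{n:\, I_n\cap J'\ne\emptyset}|I_n|$ for a slightly enlarged interval $J'$ with $|J'|\simeq |J|$, and since the $I_n$ are disjoint this sum is $\lesssim |J|$. The only subtlety is boundary intervals $I_n$ sticking out of $J$, but comparability of neighboring lengths controls the overshoot by a fixed multiple of $|J|$. I would also note the trivial case where $|I_n|/N \ge |J|$: then at most boundedly many segments can touch $S(J)$ and each contributes at most $|I_n\cap J| \le |J|$... actually one must be a little careful, since a long segment could pass over $S(J)$ without entering it (the segment sits at a single height $|I_n|/N$, and if that height is $\ge |J|$ the segment lies above the window $S(J)$, contributing nothing). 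So in fact only segments with $|I_n|/N < |J|$ matter, which by \eqref{BarConst} forces $|I_n| \lesssim |J|$, and disjointness closes the estimate. The constant depends only on $\alpha$, $N$ — hence on $\Theta, \eps, p$.

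\textbf{Reverse Carleson part.} Here I invoke Theorem \ref{BFGHR} directly. I must verify \eqref{eq:reversed-condition-first}: there is $c>0$ with $\mu(S(I))/|I| \ge c$ for every interval $I$ such that $S(I^{N_0})\cap L(\Theta,\eps)\ne\emptyset$. Given such an $I$, pick $w\in S(I^{N_0})\cap L(\Theta,\eps)$; then $\dist(\Re w, L(\Theta,\eps))=0$ while $\Re w$ lies in $I^{N_0}$ and $\Im w < N_0|I|$. Any Baranov interval $I_n$ containing a point near $\Re w$ has $|I_n|\simeq d_\eps$ of that point, which is $\lesssim \Im w \lesssim N_0|I|$; combined with $N\ge (1+\alpha)\sqrt2\, N_0$ from \eqref{est:N}, one gets that the segment over such an $I_n$ (at height $|I_n|/N$) lies inside the window $S(I)$, and its arc-length contribution to $\mu(S(I))$ is $\ge |I_n\cap I| \gtrsim |I_n| \gtrsim |I|$ again by \eqref{BarConst} and comparability of neighbors — possibly after checking that at least one full such $I_n$ (or a definite fraction of it) sits inside $I$. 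This gives the uniform lower bound, so \eqref{eq:reversed-equation} applies and $\mu$ is a reverse Carleson measure.

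\textbf{Main obstacle.} The genuine work is bookkeeping the geometry in the reverse direction: translating "$S(I^{N_0})$ meets the sublevel set" into "a Baranov interval of comparable size, together with its height-$|I_n|/N$ segment, sits well inside $S(I)$", which is precisely why the lower bound in \eqref{est:N} on $N$ was imposed. One must be careful that the point of $L(\Theta,\eps)$ found in $S(I^{N_0})$ yields a \emph{real} point at controlled distance from $L(\Theta,\eps)$ (so that $d_\eps$ there is small), and that the Baranov interval over it is not so large that its defining segment escapes the window $S(I)$ — both are exactly calibrated by \eqref{BarConst} and \eqref{est:N}. The Carleson direction is routine; the reverse direction is where the constants must be tracked.
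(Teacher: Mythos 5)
Your Carleson half is correct, though more elaborate than necessary: since the Baranov intervals $I_n$ partition $\R$ and $\mu$ is arc length on horizontal segments projecting onto these disjoint intervals, one has directly $\mu\bigl(S(J)\bigr)\le\sum_n|I_n\cap J|\le |J|$ for every interval $J$, with constant $1$.

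The reverse Carleson half, however, contains a genuine gap. You locate $w\in S(I^{N_0})\cap L(\Theta,\eps)$ and try to produce a \emph{single} Baranov interval $I_n$ near $\Re w$ whose segment contributes $\gtrsim |I|$ to $\mu(S(I))$, asserting $|I_n\cap I|\gtrsim |I_n|\gtrsim |I|$. The inequality $|I_n|\gtrsim |I|$ is false in general: the presence of $w$ only gives the \emph{upper} bound $d_\eps(\Re w)\le \Im w\le N_0|I|$, hence $|I_n|\lesssim N_0|I|$, and if $L(\Theta,\eps)$ comes very close to $\R$ near $\Re w$ the Baranov intervals there are arbitrarily short compared to $|I|$, so no single one carries a definite fraction of $|I|$. (There is a second, smaller issue: $\Re w$ lies only in $I^{N_0}$, so the interval you select need not even meet $I$.) The correct route is to aggregate over \emph{all} of $I$: for every $x\in I$ one has $d_\eps(x)\le |x-w|\le\sqrt{2}\,N_0|I|$, so by \eqref{BarConst} the Baranov interval containing $x$ satisfies $|I_n|\le\alpha\sqrt{2}\,N_0|I|$, and by the choice \eqref{est:N} of $N$ the corresponding point $z=x+i|I_n|/N$ of $\Omega_{\Theta,\eps}$ has height at most $|I|$, i.e.\ $z\in S(I)$. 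Since this holds for every $x\in I$ and the projections are disjoint, $\mu\bigl(S(I)\bigr)=|I|$, which gives \eqref{eq:reversed-condition-first} with constant $1$ and lets you invoke Theorem \ref{BFGHR}. So the structure of your argument (verify the hypothesis of Theorem \ref{BFGHR}) is the right one, but the lower bound must come from the whole of $I$, not from one Baranov interval.
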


\begin{proof}
Since $\mu$ is the Lebesgue line measure supported on horizontal segments which, projected along
the imaginary axis onto the real line, have intersection of Lebesgue measure zero, it is clearly a
Carleson measure.

Let us now consider the reverse Carleson measure condition. Suppose $I$ is a real interval
with $S(I^{N_0})\cap L(\Theta,\eps)\neq \emptyset$, then for every $x\in I$, and with
\eqref{est:N} in mind, 
\[
 d_{\eps}(x)\le \sqrt{2}N_0|I|\le \frac{N|I|}{1+\alpha}, 
\]
and hence, 
\[
 |I|\ge \frac{ {(1+\alpha)}d_{\eps}(x)}{N},
\]
where $x$ is arbitrary in $I$. On the other hand, there is $n$ such that $x\in I_n$, so that
when $z=x+iy \in \Omega(\Theta,\eps)$ we have $y=|I_n|/N$ and from  
\eqref{BarConst} we know that since $x\in I_n$ we thus get
\[
 y=\frac{|I_n|}{N}\le \frac{\alpha d_{\eps}(x)}{N} \le
 |I|.
\]
As a result, for every $x\in I$, the corresponding $z=x+iy\in \Omega_{\Theta,\eps}$ is in $S(I)$ so that 
$\mu\bigl(S(I)\bigr)=|I|$. Whence \eqref{eq:reversed-condition-first} is fulfilled and we conclude from
Theorem \ref{BFGHR} that $\mu$ is a reverse Carleson measure.
\end{proof}

\begin{lemma}\label{harmmeas}
In the notation above we have
\[
 \inf\{\omega_z(F^{a,\sigma}_0):z\in \Omega_{\Theta,\eps}\}
 \ge \delta=\frac{4\eta   }{\bigl(2+N(a+1)\bigr)^2\pi}>0,
\]
independently of the choice of $\sigma$ and $A_n^0\subset A_n^{\sigma}$ satisfying
\eqref{dens}.
\end{lemma}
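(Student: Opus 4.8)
The plan is to estimate from below the harmonic measure $\omega_z(F_0^{a,\sigma})$ for an arbitrary point $z=x+iy\in\Omega_{\Theta,\eps}$. Such a point lies directly above some Baranov interval $I_m$, at height $y=|I_m|/N$. The key observation is that $F_0^{a,\sigma}$ contains, by \eqref{dens}, a subset of $I_m^{a,\sigma}\subset I_m^a$ of Lebesgue measure at least $\eta|I_m^{a,\sigma}|=\eta|I_m|/N$, and that this subset sits in a real interval whose distance to $x$ is controlled: since $I_m^{a,\sigma}\subset I_m^a$ and $x\in I_m$, the whole of $I_m^{a,\sigma}$ is contained in the interval centred at $x$ of half-length $\tfrac12|I_m|+\tfrac{a}{2}|I_m|=\tfrac{a+1}{2}|I_m|$. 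Thus every point $t$ of this subset satisfies $|x-t|\le\tfrac{a+1}{2}|I_m|$, while $y=|I_m|/N$, so $(x-t)^2+y^2\le\bigl(\tfrac{a+1}{2}|I_m|\bigr)^2+\bigl(\tfrac{|I_m|}{N}\bigr)^2\le\bigl(\tfrac{a+1}{2}+\tfrac1N\bigr)^2|I_m|^2$, and one can bound $\tfrac{a+1}{2}+\tfrac1N\le\tfrac{N(a+1)+2}{2N}$.

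First I would write $\omega_z(F_0^{a,\sigma})=\int_{F_0^{a,\sigma}}P_z(t)\,\d t\ge\int_{E}P_z(t)\,\d t$ where $E\subset I_m^{a,\sigma}\cap F_0^{a,\sigma}$ is the piece provided by \eqref{dens}, with $|E|\ge\eta|I_m|/N$. Then I would bound the Poisson kernel from below on $E$: for $t\in E$,
$$
P_z(t)=\frac1\pi\frac{y}{(x-t)^2+y^2}\ge\frac1\pi\cdot\frac{|I_m|/N}{\bigl(\tfrac{N(a+1)+2}{2N}\bigr)^2|I_m|^2}=\frac{4N}{\pi\bigl(N(a+1)+2\bigr)^2|I_m|}.
$$
Multiplying by $|E|\ge\eta|I_m|/N$ gives
$$
\omega_z(F_0^{a,\sigma})\ge\frac{4N}{\pi\bigl(N(a+1)+2\bigr)^2|I_m|}\cdot\frac{\eta|I_m|}{N}=\frac{4\eta}{\pi\bigl(2+N(a+1)\bigr)^2},
$$
which is exactly the claimed bound $\delta$, and it is manifestly independent of $m$, of $\sigma$, and of the particular choice of $A_n^0$ subject to \eqref{dens}.

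The argument is essentially a one-line Poisson-kernel estimate once the geometry is set up, so there is no serious obstacle; the only point requiring a little care is the bookkeeping of the inclusion $I_m^{a,\sigma}\subset I_m^a$ together with $x\in I_m$ to get the uniform bound $|x-t|\le\tfrac{a+1}{2}|I_m|$, and checking that the crude estimate $\tfrac{a+1}{2}+\tfrac1N\le\tfrac{N(a+1)+2}{2N}$ (valid for all $N\ge1$) produces precisely the constant in the statement. One should also note that $F_0^{a,\sigma}\supset E$ uses only that $I_m^{a,\sigma}$ is one of the blocks whose selected sub-blocks $\bigcup_{(k,l)\in A_m^0}\tilde I_{k,l}$ make up $F_0^{a,\sigma}$, which is immediate from the definitions of $F_0^{a,\sigma}$ and \eqref{dens}.
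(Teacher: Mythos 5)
Your proof is correct and follows essentially the same route as the paper: locate $z$ over some $I_m$ at height $|I_m|/N=|I_m^{a,\sigma}|$, bound $|x-t|\le\tfrac{a+1}{2}|I_m|$ for $t\in I_m^{a,\sigma}$, deduce the lower bound $P_z(t)\ge\frac{4}{\pi\bigl(2+N(a+1)\bigr)^2|I_m^{a,\sigma}|}$, and integrate over the selected set of measure at least $\eta|I_m^{a,\sigma}|$. The only cosmetic difference is that the paper bounds $|z-t|\le y+|x-t|$ directly while you bound $(x-t)^2+y^2$ by $\bigl(|x-t|+y\bigr)^2$, which is the same estimate.
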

 
\begin{proof}

Observe that if $z=x+iy\in \Omega_{\Theta,\eps}$ then there exists $n\in \Z$, 
such that $x\in I_{n}$ and $y=|I_n|/N=|I_n^{a,\sigma}|$. 

Now, for $t\in I_n^{a,\sigma}\subset I_n^a$, the distance from $t$ to $x$ is bounded by the distance of one edge of $I_n$ to the oposit edge of $I_n^a$, that is $|x-t|\leq |I_n|/2+|I_n^a|/2$. Therefore
$$
|z-t|\leq |y|+|x-t|\leq |I_n^{a,\sigma}|+\frac{1}{2}|I_n|+\frac{aN}{2}|I_n^{a,\sigma}|=
|I_n^{a,\sigma}|\left(1+\frac{N(a+1)}{2}\right).$$ Hence
\begin{eqnarray*}
\omega_z(F^{a,\sigma}_0)
&\geq& 
 \omega_z(\bigcup_{(k,l)\in A_n^{0}}\tilde{I}_{k,l})=\frac{1}{\pi}\int_{\bigcup_{(k,l)\in A_n^{0}}
 \tilde{I}_{k,l}}\frac{|I_n^{a,\sigma}|}{|z-t|^2}\,\mbox{d}t\\
&\geq&\frac{4\times |\bigcup_{(k,l)\in A_n^{0}}\tilde{I}_{k,l}|
 \times |I_n^{a,\sigma}|}{\bigl(2+N(a+1)\bigr)^2\pi|I_n^{a,\sigma}|^2}\\
&\ge& \frac{4\eta }{\bigl(2+N(a+1)\bigr)^2\pi}
\end{eqnarray*}
which proves the lemma. 
\end{proof}

\begin{proof}[Proof of Theorem \ref{thm1}]
The left hand inequality (Carleson embedding) is immediate.\\

Let us consider the right hand embedding (reverse Carleson inequality).
From an idea of Havin-J\"oricke \cite{HJ} and Dyakonov \cite{Dy}, we know that for every $1<q<+\infty$, and for every $f\in 
H^p$, we have the Jensen inequality 
\begin{eqnarray}
 |f(z)|^q&\le& 2\left(\int_{F^{a,\sigma}_0} |f(t)|^qP_z(t)\,\mbox{d}t\right)^{\omega_z(F^{a,\sigma}_0)}
 \left(\int_{\R} |f(t)|^qP_z(t)\,\mbox{d}t\right)^{1-\omega_z(F^{a,\sigma}_0)}\nonumber\\
&=&2\int_{\R} |f(t)|^qP_z(t)\,\mbox{d}t\left(\frac{\dst\int_{F^{a,\sigma}_0} |f(t)|^qP_z(t)\,\mbox{d}t}
{\dst\int_{\R} |f(t)|^qP_z(t)\,\mbox{d}t}\right)^{\omega_z(F^{a,\sigma}_0)}\label{Dyak1}
\end{eqnarray}
where $P_z$ is the  Poisson kernel in the upper half place.
Recall that
\[
 \Omega_{\Theta,\eps}=\bigcup_{n\in\Z}\{z=x+iy\in \C_+:x\in I_n,y=|I_n|/N,n\in\N\}.
\]
It follows from Lemma \ref{harmmeas} and \eqref{Dyak1} that, for $z\in \Omega_{\Theta,\eps}$,
\begin{eqnarray}
 |f(z)|^q&\le&2\int_{\R} |f(t)|^qP_z(t)\,\mbox{d}t\left(\frac{\dst\int_{F^{a,\sigma}_0} |f(t)|^qP_z(t)\,\mbox{d}t}
{\dst\int_{\R} |f(t)|^qP_z(t)\,\mbox{d}t}\right)^\delta\nonumber\\
&=& 2\left(\int_{F^{a,\sigma}_0} |f(t)|^qP_z(t)\,\mbox{d}t\right)^{\delta}
\left(\int_{\R} |f(t)|^qP_z(t)\,\mbox{d}t\right)^{1-\delta}\label{Dyak2}
\end{eqnarray}

Now pick a function $f\in L^p(\R)$ and let $s=\dst\frac{1}{2}\left(1-\frac{1}{p}\right)$ so that $(1-s)p=\frac{1+p}{2}$.
Note that $0<s<1$ and that $1<(1-s)p<p$.

Write $q=(1-s)p$ and define the two harmonic functions $u(z)=\dst\int_\R |f(t)|^qP_z(t)\,\mbox{d}t$
and $u_\sigma(z)=\dst\int_\R\chi_{F^{a,\sigma}_0}(t)|f(t)|^qP_z(t)\,\mbox{d}t$. Then
\eqref{Dyak2} reads as
\begin{equation}
|f(z)|^q\leq 2 u_\sigma(z)^\delta u(z)^{1-\delta}.
\label{eq:Dyak2bis}
\end{equation}
Since by Lemma \ref{lem2} 
$\mu$ is a Carleson measure, and so, in view of \cite[Theorem I.5.6]{Gar}, 
there exists a constant $C(p)$ such that, for every $\ffi\in L^{1/(1-s)}(\R)=L^{\frac{1+p}{2p}}(\R)$,
$$
\int_{\Omega_{\Theta,\eps}}\ent{\int_{\R} |\ffi(t)|P_z(t)\,\mbox{d}t}^{\frac{1}{1-s}}\,\mbox{d}\mu(z)
\leq C(p)\int_\R |\ffi(t)|^{\frac{1}{1-s}}\,\mbox{d}t.
$$
Applying this to $\ffi=\chi_{F_0^{a,\sigma}}|f|^{(1-s)p}$ and to $\ffi=|f|^{(1-s)p}$, which 
are both in $L^{1/(1-s)}$, we get
\begin{eqnarray}
\int_{\Omega_{\Theta,\eps}}u_\sigma(z)^{\frac{1}{1-s}}\,\mbox{d}\mu(z)&\leq&
C(p)\int_{F^{a,\sigma}_0} |f(t)|^p\,\mbox{d}t\label{eq:Carleson1}\\
\int_{\Omega_{\Theta,\eps}}u(z)^{\frac{1}{1-s}}\,\mbox{d}\mu(z)&\leq&
C(p)\int_{\R} |f(t)|^p\,\mbox{d}t.\label{eq:Carleson2}
\end{eqnarray}

Now, integrating \eqref{eq:Dyak2bis} with respect to $\mu$ we get with \eqref{Dyak2}
\begin{eqnarray*}
\int_{\Omega_{\Theta,\eps}}|f(z)|^p\,\mbox{d}\mu(z)
&=&\int_{\Omega_{\Theta,\eps}}|f(z)|^{s p}|f(z)|^q\,\mbox{d}\mu(z)\\
&\leq& 2\int_{\Omega_{\Theta,\eps}}|f(z)|^{s p}u_\sigma(z)^\delta u(z)^{1-\delta}\,\mbox{d}\mu(z)\\
&\leq&2\left(\int_{\Omega_{\Theta,\eps}}|f(z)|^p\,\mbox{d}\mu(z)\right)^s
\left(\int_{\Omega_{\Theta,\eps}}u_\sigma(z)^{\frac{\delta}{1-s}}u(z)^{\frac{1-\delta}{1-s}}\,\mbox{d}\mu(z)\right)^{1-s}
\end{eqnarray*}
where we have applied
H\"older's inequality with exponents $1/s$, $1/(1-s)$. It follows that
\begin{eqnarray*}
\int_{\Omega_{\Theta,\eps}}|f(z)|^p\,\mbox{d}\mu(z)&\leq& 2^{\frac{1}{1-s}}\int_{\Omega_{\Theta,\eps}}u_\sigma(z)^{\frac{\delta}{1-s}}u(z)^{\frac{1-\delta}{1-s}}\,\mbox{d}\mu(z)\\
&\leq& 2^{\frac{2p}{1+p}}\left(\int_{\Omega_{\Theta,\eps}}u_\sigma(z)^{\frac{1}{1-s}}\,\mbox{d}\mu(z)\right)^\delta
\left(\int_{\Omega_{\Theta,\eps}}u(z)^{\frac{1}{1-s}}\,\mbox{d}\mu(z)\right)^{1-\delta}
\end{eqnarray*}
where we have again used H\"older's inequality, now with exponents $1/\delta$, $1/(1-\delta)$
($\delta$ can be assumed in $(0,1)$).
Using \eqref{eq:Carleson1}-\eqref{eq:Carleson2} this gives
$$
\int_{\Omega_{\Theta,\eps}}|f(z)|^p\,\mbox{d}\mu(z)\leq
2^{\frac{2p}{1+p}}C(p)\left(\int_{F^{a,\sigma}_0}|f(t)|^p\,\mbox{d}t\right)^\delta
\left(\int_{\R}|f(t)|^p\,\mbox{d}t\right)^{1-\delta}.
$$

On the other hand, from Lemma \ref{lem2}, we know that $\mu$ is reverse Carleson with constant $C_R$, so
\[
 \int_\R|f(t)|^p\,\mbox{d}t\le C_R^p \int_{\Omega_{\Theta,\eps}}|f|^p\,\mbox{d}\mu
 \le C_R2^{\frac{2p}{1+p}}C(p)\left(\int_{F^{a,\sigma}_0} |f(t)|^p\,\mbox{d}t\right)^{\delta}
 \left(\int_{\R} |f(t)|^p\,\mbox{d}t\right)^{1-\delta},
\]
which yields
\[
 \int_{\R}|f(t)|^p\,\mbox{d}t\le \Big(C_R2^{\frac{2p}{1+p}}C(p)\Big)^{1/\delta}\int_{F^{a,\sigma}_0}|f(t)|^p\,\mbox{d}t.
\]
It remains to remember the form of $\delta$ as given in Lemma \eqref{harmmeas}
to conclude.
\end{proof}

\section{Proof of (i) implies (iii) in Theorem \ref{thm2} and estimate of the constants.} 

In view of the construction of $F^{a,\sigma}_0$ the main idea is to switch to the sets
$\tilde{I}_{k,l}$, $(k,l)\in A_m^0$ ($k,l,m$ appropriate).

Suppose the set $\Gamma$ is $(\gamma,a)$-relatively dense with respect to the Baranov sequence
$(I_n)_n$:
\[
 |\Gamma\cap I_n^a|\ge \gamma a |I_n|=\gamma |I_n^a|.
\]
Since the $I_{n,k}^a$'s partition $I_n^a$, this implies  that for every $n$ there exists at least
one $k$, denoted by $k=\sigma(n)$, such that
\[
  |\Gamma\cap I_{n,k}^{a}|\ge \gamma  |I_{n,k}^{a}|.
\]
By our previously introduced notation $I_n^{a,\sigma}=I^a_{n,\sigma(n)}$.
Recall that $A_n^{\sigma}=\{(k,l):I_{k,l}\cap I_n^{a,\sigma}\neq\emptyset\}$ and $I_n^{a,\sigma}=
\bigcup_{(k,l)\in A_n^{\sigma}}\tilde{I}_{k,l}$.
Then the relative density condition translates to
\begin{eqnarray}\label{eq5}
\sum_{(k,l)\in A_n^{\sigma}}|\Gamma\cap\tilde{I}_{k,l}|
 =|\Gamma\cap I_n^{a,\sigma}|\ge\gamma |I_n^{a,\sigma}|.
\end{eqnarray}
Set
\[
 A_n^{0}=\{(k,l)\in A_n^{\sigma}:|\Gamma\cap\tilde{I}_{k,l}|\ge \frac{\gamma}{2}|\tilde{I}_{k,l}|\}.
\]
then decomposing $A_n^{\sigma}=A_n^0\cup A_n^{\sigma}\setminus A_n^0$, we deduce from
\eqref{eq5}
\[
 \sum_{(k,l)\in A_n^{0}}|\Gamma\cap\tilde{I}_{k,l}|
 \ge \gamma |I_n^{a,\sigma}|-\frac{\gamma}{2}\sum_{(k,l)\in A_n^\sigma\setminus A_n^0}|\tilde{I}_{k,l}|
 \ge \frac{\gamma}{2}|I_n^{a,\sigma}|,
\]
which in particular yields condition \eqref{dens} with 
$$\eta =\gamma/2.$$

\smallskip

From now on we will use the notation
\[
 F:=F^{a,\sigma}_0=\bigcup_n J_n,
\]
where, as above, $J_n=\tilde{I}_{k,l}$ for an appropriate $(k,l)\in A_n^0$. 
In particular, we deduce from the very definition of $A_n^0$ that for every $n$
\[
 |\Gamma\cap J_n|\ge \frac{\gamma}{2}|J_n|. 
\]
We should also recall that since $J_n=\tilde{I}_{k,l}\subset I_{k,l}$ we have
for every $x\in J_n$, $|J_n|\le \frac{\alpha}{N} \dist(x,L(\Theta,\eps))$.
\medskip

A main ingredient of our proof is a Remez-type inequality which requires the control of the uniform norm
on a bigger set depending on that of a smaller set. Our method only works when the sets
are sufficiently small. For that reason we have to reduce the situation to sufficiently small sets which
will be achieved using Theorem \ref{thm1}.

\subsection{Step 1  --- Reduction to the dominating set $F$}

\begin{corollary}\label{cor:localbernstein}
With the notation of Theorems \ref{BaraBern} and \ref{thm1}, there exists a constant $\widetilde{C}=C(\Theta,\alpha,p,\varepsilon)$, depending only on $\Theta, \alpha, p$ and $\varepsilon$
such that, for every $f\in K^p_{\Theta}$ and every $k\in\N$, we have
\begin{equation}
\int_{F}\left({|f^{(k)}(x)|}{d_\varepsilon(x)^k}\right)^p\d x
\leq e^{\widetilde{C} \frac{a^2 }{\gamma }} \left(\frac{4^kk!}{\varepsilon^k}\right)^p \int_{F}|f(x)|^p\d x.
\label{condLS1bis}
\end{equation}
\end{corollary}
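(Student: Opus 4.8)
The plan is to chain together the two main black boxes of the preceding section: Baranov's Bernstein inequality (Theorem \ref{BaraBern}) applied to the derivative $f^{(k)}$, and the uniform domination of the reference set $F=F^{a,\sigma}_0$ given by Theorem \ref{thm1}. First I would observe that by Theorem \ref{BaraBern}, applied to $f\in K^p_\Theta$ and to the integer $k$,
\[
\int_\R\big(|f^{(k)}(x)|\,d_\varepsilon(x)^k\big)^p\,\mathrm{d}x
=\|f^{(k)}d_\varepsilon^k\|_p^p
\le \Big(C(\Theta,p,\varepsilon)\,k!\,(4/\varepsilon)^k\Big)^p\|f\|_p^p .
\]
This already controls the left-hand side of \eqref{condLS1bis} from above by $\|f\|_p^p$ with the desired factorial/exponential-in-$k$ factor, since the integral over $F$ is no larger than the integral over $\R$. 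What remains is to replace $\|f\|_p^p=\int_\R|f|^p$ on the right by $\int_F|f|^p$, and this is precisely what the right-hand inequality of \eqref{normequiv} in Theorem \ref{thm1} provides: because $\Gamma$ is $(\gamma,a)$-relatively dense, the construction carried out just before this corollary produces $\sigma$ and sets $A_n^0\subset A_n^\sigma$ satisfying the density hypothesis \eqref{dens} with $\eta=\gamma/2$, so Theorem \ref{thm1} yields $\|f\|_{L^p(\R)}^p\le e^{C a^2/\eta}\int_{F^{a,\sigma}_0}|f|^p = e^{2Ca^2/\gamma}\int_F|f|^p$ for a constant $C=C(\Theta,\alpha,p,\varepsilon)$.

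Combining the two displays, with $\widetilde C$ chosen to absorb $2C$ together with the $p$-th power of $C(\Theta,p,\varepsilon)$ (all of these depend only on $\Theta,\alpha,p,\varepsilon$), gives
\[
\int_F\big(|f^{(k)}(x)|\,d_\varepsilon(x)^k\big)^p\,\mathrm{d}x
\le \int_\R\big(|f^{(k)}(x)|\,d_\varepsilon(x)^k\big)^p\,\mathrm{d}x
\le e^{\widetilde C a^2/\gamma}\Big(\frac{4^k k!}{\varepsilon^k}\Big)^p\int_F|f(x)|^p\,\mathrm{d}x,
\]
which is exactly \eqref{condLS1bis}. I should note here that one must check $f^{(k)}\in K^p_\Theta$, or at least that Theorem \ref{BaraBern} applies to it; in fact the statement of Theorem \ref{BaraBern} is already phrased for all $f\in K^p_\Theta$ and all $n\in\N$, so applying it directly to $f$ with $n=k$ (rather than to $f^{(k)}$ with $n=0$) is the clean route and requires no extra justification.

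There is really no serious obstacle here: the corollary is a packaging step that records, in a single inequality, the combination "Bernstein on $\R$" $+$ "reverse Carleson domination on $F$" that will be fed into the Remez-type argument in the remaining steps. The only mild point of care is bookkeeping of constants — making sure the exponential factor is genuinely of the form $e^{\widetilde C a^2/\gamma}$ with $\widetilde C$ independent of $a$, $\gamma$, $k$ and $\sigma$ — which follows because the constant $C$ in Theorem \ref{thm1} is asserted there to be independent of $\sigma$, $\eta$, $a$ and the choice of $A_n^0$, and because $\eta=\gamma/2$ so that $1/\eta=2/\gamma$.
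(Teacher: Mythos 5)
Your proposal is correct and follows essentially the same route as the paper: bound $\int_F$ by $\int_\R$, apply Theorem \ref{BaraBern}, then use the right-hand inequality of \eqref{normequiv} in Theorem \ref{thm1} (with $\eta=\gamma/2$ coming from the construction of $F=F^{a,\sigma}_0$) to pass back to $\int_F|f|^p$. Your bookkeeping of the constants, including absorbing $C(\Theta,p,\varepsilon)^p$ and the factor $2$ into $\widetilde{C}$ using $a^2/\gamma\ge 1$, matches what the paper does implicitly.
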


\begin{proof} Indeed, using successively a trivial estimate, Theorem \ref{BaraBern}
and Theorem \ref{thm1}, we get for every $f\in K^p_{\Theta}$,
\begin{eqnarray*}
\int_{F}\left({|f^{(k)}(x)|}{d_\varepsilon(x)^k}\right)^p\,\mbox{d}x
 &\le& \int_{\R}\left({|f^{(k)}(x)|}{d_\varepsilon(x)^k}\right)^p\,\mbox{d}x\\
 &\le& C^p(\Theta,p,\eps) 
   \left(\frac{4^kk!}{\varepsilon^k}\right)^p\int_{\R}|f(x)|^p\,\mbox{d}x \\
 &\le&   C^p({\Theta,p,\varepsilon} )\left(\frac{4^kk!}{\varepsilon^k}\right)^p e^{C\frac{a^2 }{\gamma }}\int_{F}|f(x)|^p\,\mbox{d}x
\end{eqnarray*}
as claimed.
\end{proof}

Inequality \eqref{condLS1bis} means that we have a Bernstein inequality with respect to 
$F$ so that we can
replace $\R$ by $F$ in Theorem \ref{thm2}.

\subsection{Step 2 --- Good intervals}

Assume that \eqref{condLS1bis} holds. 
An integer $n$ and the corresponding interval will be called \emph{bad} if there exists
an integer $m_n$ such that
$$
\int_{J_n}\left({|f^{(m_n)}(x)|}{d_\varepsilon(x)^{m_n}}\right)^p\,\mbox{d}x\geq 
  {e^{\widetilde{C} \frac{a^2 }{\gamma }}} 4^{m_n}   \Big(\frac{4^{m_n}m_n!}{\varepsilon^{m_n}}\Big)^p\int_{J_n}|f(x)|^p\,\mbox{d}x.
$$
Observe that $m_n\ge 1$.
We will say that $n$ and $J_n$ are {\em good} if they are not bad.\\

\medskip

\noindent{\bf Claim 1.} {\sl The good intervals contain most of the mass of $f$ in the sense that}
$$
\int_{\cup_{n \; is \; good}J_n}
 \abs{ f(x)}^p \mbox{d}x\geq \frac{2}{3}\int_{F}\abs{ f(x)}^p \mbox{d}x.
$$

\medskip

\begin{proof}[Proof of Claim 1] By definition of bad intervals
\begin{eqnarray*}
\int_{\cup_{n \; is \; bad}J_n}\abs{ f(x)}^p \mbox{d}x
&=&\sum_{n \; is \; bad}\int_{J_n}\abs{ f(x)}^p \mbox{d}x\\
&\leq &\sum_{n \; is \; bad}\frac{{e^{-\widetilde{C} \frac{a^2 }{\gamma }}}}{  4^{m_n}}\Big(\frac{\varepsilon^{m_n}}{4^{m_n} m_n!}\Big)^p\int_{J_n}
 \left({|f^{(m_n)}(x)|}{d_\varepsilon(x)^{m_n}}\right)^p\,\mbox{d}x.
\end{eqnarray*}
Now
\begin{multline*}
\frac{{e^{-\widetilde{C} \frac{a^2 }{\gamma }}}}{  4^{m_n}}\Big(\frac{\varepsilon^{m_n}}{4^{m_n} m_n!}\Big)^p \int_{J_n}
 \left({|f^{(m_n)}(x)|}{d_\varepsilon (x)^{m_n}}\right)^p
 \,\mbox{d}x\\
 \leq \sum_{k\ge 1}  \frac{{e^{-\widetilde{C} \frac{a^2 }{\gamma }}}}{ 4^{k}}\Big(\frac{\varepsilon^{k}}{4^k k!}\Big)^p\int_{J_n}
 \left({|f^{(k)}(x)|}{d_\varepsilon (x)^{k}}\right)^p\,\mbox{d}x.
\end{multline*}
By Fubini's theorem we get 
\begin{eqnarray*}
\int_{\cup_{n \; is \; bad}J_n}\abs{ f(x)}^p\,\mbox{d}x
&\leq & \sum_{k\ge 1}  \frac{{e^{-\widetilde{C} \frac{a^2 }{\gamma }}}}{ 4^{k}}\Big(\frac{\varepsilon^{k}}{4^k k!}\Big)^p\int_{\dst\cup_{n \; is \; bad}J_n}
  \left({|f^{(k)}(x)|}{d_\varepsilon(x)^{k}}\right)^p\,\mbox{d}x\\
&\leq &  \sum_{k\ge 1}  \frac{{e^{-\widetilde{C} \frac{a^2 }{\gamma }}}}{  4^{k}}\Big(\frac{\varepsilon^{k}}{4^k k!}\Big)^p\int_{F}
  \left({|f^{(k)}(x)|}{d_\varepsilon(x)^{k}}\right)^p \mbox{d}x.
\end{eqnarray*}
Bernstein's Inequality \eqref{condLS1bis} then implies
$$
\int_{\cup_{n \; is \; bad}J_n}\abs{ f(x)}^p \mbox{d}x
\leq\sum _{k\geq 1} \frac{1}{4^{k}}  \int_{F}\abs{ f(x)}^p \mbox{d}x
= \frac{1}{3}\int_{F}\abs{ f(x)}^p \mbox{d}x,
$$
from which Claim 1 follows.
\end{proof}

\medskip

\subsection{Step 3 --- Good points}

Let $\kappa>1$.
For each {\em good} $n$, we will say that a point $x\in J_n$ is {\em $\kappa$-good} if, for every $k\in\N$,
$$
|f^{(k)}(x)|^p\leq 2\kappa  {e^{\widetilde{C} \frac{a^2 }{\gamma }}} 
 \times 8^k\Big(\frac{ 4^{k} k!}{\varepsilon ^kd_\varepsilon(x)^k}\Big)^p\frac{1}{|J_n|}\int_{J_n}|f(x)|^p\,\mbox{d}x.
$$

\noindent{\bf Claim 2.} {\sl 
Let $G_n$ be the set of $\kappa$-good points in $J_n$. Then $|G_n|\geq\dst\left(1-\frac{1}{\kappa}\right)|J_n|$.}

\begin{remark}
Let $\kappa=\frac{4}{\gamma}$. 
As  $|J_n\cap\Gamma|\geq\frac{\gamma}{2} |J_n|$, we get
$$
|G_n\cap\Gamma|\geq |J_n\cap\Gamma|-|J_n\setminus G_n|\geq \frac{\gamma}{2} |J_n|-\frac{\gamma}
 {4}|J_n|
=\frac{\gamma}{4} |J_n|\geq\frac{\gamma}{4} |G_n|.
$$
This means that there are many good points in $\Gamma$, but we shall not use this fact.
\end{remark}

\medskip

\begin{proof}[Proof of Claim 2] 
Let $B_n=J_n\setminus G_n$ be the set of bad points. Then for every
$x \in B_n$, there exists $ k_x \geq 0$ such that
$$
\frac{1}{|J_n|}\int_{J_n} \abs{ f(y)}^p \mbox{d}y \leq \frac{{e^{-\widetilde{C} \frac{a^2 }{\gamma }}}}{2\kappa  \times 8^{k_x}}
\Big( \frac{\varepsilon^{k_x} d_\varepsilon(x)^{k_x}}{4^{k_x} k_x!}\Big)^p\abs{f^{(k_x)}(x)}^p.
$$
Therefore
$$
\frac{1}{|J_n|}\int_{J_n} \abs{ f(y)}^p \mbox{d}y \leq \sum_{k\geq0}
   \frac{{e^{-\widetilde{C} \frac{a^2 }{\gamma }}}}{2\kappa  \times 8^{k}}
\Big( \frac{\varepsilon^k d_\varepsilon(x)^{k}}{4^k k!}\Big)^p
\abs{ f^{(k)}(x)}^p.
$$
Integrating both sides over $B_n$, we obtain
\begin{eqnarray*}
\frac{|B_n|}{|J_n|}\int_{J_n} \abs{ f(y)}^p\,\mbox{d}y &\leq& \sum_{k\geq0}
\frac{{e^{-\widetilde{C} \frac{a^2 }{\gamma }}}}{2\kappa   \times 8^{k}}\Big(\frac{\varepsilon^k}{4^{k} k!}\Big)^p
\int_{B_n}\Big(| f^{(k)}(x)|d_\varepsilon(x)^{k}\Big)^p\,\mbox{d}x\\
&\leq& \sum_{k\geq0}
\frac{{e^{-\widetilde{C} \frac{a^2 }{\gamma }}}}{2\kappa\times  8^{k}}\Big(\frac{\varepsilon^k}{4^{k} k!}\Big)^p
\int_{J_n}\Big(| f^{(k)}(x)|d_\varepsilon(x)^{k}\Big)^p\,\mbox{d}x.
\end{eqnarray*}
Now, since $n$ is good, we get
$$
\frac{|B_n|}{|J_n|}\int_{J_n} \abs{ f(y)}^p \mbox{d}y \leq \sum_{k\geq0}\frac{4^{k}}{2\kappa \times 8^{k}}
\int_{J_n}|f(x)|^p\,\mbox{d}x
=\frac{1}{\kappa }\int_{J_n}|f(x)|^p\,\mbox{d}x.
$$
Since $f$ cannot be $0$ almost everywhere on $J_n$, $\dst\int_{J_n}|f(x)|^p\,\mbox{d}x\not=0$
and we get $|B_n|\leq\dst\frac{1}{\kappa}|J_n|$ which yields Claim 2.
\end{proof}

\medskip

In the next step, we will need a Remez type inequality. There exist different 
versions of such inequalities, {\it e.g.}
\cite[Lemma B]{NSV}. The one that seems most suitable for our needs 
is the following  straightforward adaptation of a
result of O. Kovrijkine \cite[Corollary, p 3041]{Ko} {\it see also} \cite[Theorem 4.3]{GJ}.

\begin{lemma}[Kovrijkine's Remez Type Inequality] \label{lem:ko}
Let $p\in[1,\infty)$.
Let $\Phi$ be an analytic function, $J$ an interval and $E\subset J$ a set of positive
measure.
Let $M=\max_{D_{J}}|\Phi(z)|$ where $D_{J}=\{z\in\C, dist(z,J)<4|J|\}$ and let $m=\max_{J}|\Phi(x)|$, then
\begin{equation}
\label{eq:ko}
\int_J|\Phi(s)|^p\d s\leq\left(\frac{300|J|}{|E|}\right)^{p\frac{\ln (M/m)}{\ln 2}+1}\int_E|\Phi(s)|^p\d s.
\end{equation}
\end{lemma}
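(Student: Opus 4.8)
The plan is to reduce \eqref{eq:ko} to the classical Remez inequality for algebraic polynomials, following Kovrijkine \cite{Ko}. After an affine change of variable I may assume $J=[-1,1]$, and dividing $\Phi$ by $m$ I may assume $m=\max_J|\Phi|=1$, so $M\geq 1$; note that then, for any $x_0\in J$, the disc $D(x_0,3|J|)$ is contained in $D_J$ since $\dist(z,J)\leq|z-x_0|$. Fix $x_0\in J$ with $|\Phi(x_0)|=1$ and apply Jensen's formula on the concentric discs $D(x_0,\tfrac32|J|)\subset D(x_0,3|J|)$: as their radii are in ratio $2$, the number $n$ of zeros of $\Phi$ in $D(x_0,\tfrac32|J|)$ satisfies $n\ln 2\leq\ln\bigl(M/|\Phi(x_0)|\bigr)=\ln(M/m)$, i.e. $n\leq\frac{\ln(M/m)}{\ln 2}$. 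Let $z_1,\dots,z_n$ be these zeros, $B$ the finite Blaschke product of the disc $D(x_0,\tfrac32|J|)$ with zero set $\{z_j\}$ (so $|B|=1$ on the bounding circle, $|B|\leq 1$ inside), and $G:=\Phi/B$, which is holomorphic and zero-free on $D(x_0,\tfrac32|J|)$; since $|G|=|\Phi|\leq M$ on the bounding circle, the maximum principle gives $|G|\leq M$ on that disc.

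Next I would tame $G$ on $J$ by harmonicity. The function $v:=\ln M-\ln|G|$ is nonnegative and harmonic on $D(x_0,\tfrac32|J|)$, with $v(x_0)=\ln M-\ln|G(x_0)|\leq\ln M$ because $|G(x_0)|=1/|B(x_0)|\geq 1$. Since $J\subset\overline{D(x_0,|J|)}$ and the radii $|J|$ and $\tfrac32|J|$ are comparable, Harnack's inequality bounds $v$ on $J$ by an absolute multiple of $v(x_0)$, hence by $C_1\ln M$; this yields $M^{-C_1}\leq|G(x)|\leq M$ on $J$, so $|G|$ oscillates on $J$ by a factor at most $M^{C_1+1}=2^{(C_1+1)\ln(M/m)/\ln 2}$. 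Writing $B=P/\widetilde P$ with $P(z)=\prod_j(z-z_j)$ a monic polynomial of degree $n$ and $\widetilde P$ a polynomial of degree $n$ whose modulus on $J$ likewise oscillates by at most $C_2^{\,n}$ (a direct computation with the Blaschke denominators, which are bounded away from $0$ on $J$), I obtain for every measurable $E\subseteq J$
\[
 \frac{\int_J|\Phi(s)|^p\,\d s}{\int_E|\Phi(s)|^p\,\d s}\ \leq\ M^{(C_1+1)p}\,C_2^{\,pn}\,\frac{\int_J|P(s)|^p\,\d s}{\int_E|P(s)|^p\,\d s}\ \leq\ C_3^{\,p\frac{\ln(M/m)}{\ln 2}}\,\frac{\int_J|P(s)|^p\,\d s}{\int_E|P(s)|^p\,\d s},
\]
so it remains to prove an $L^p$ Remez inequality for the honest polynomial $P$, of degree $n\leq\frac{\ln(M/m)}{\ln 2}$.

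For that I would start from the classical sup-norm Remez inequality $\sup_J|P|\leq\bigl(4|J|/|F|\bigr)^n\sup_F|P|$, valid for all measurable $F\subseteq J$ of positive measure. To reach an $L^p$ form, set $A=\frac1{|E|}\int_E|P|^p\,\d s$ and $F=\{s\in E:\ |P(s)|^p\leq 2A\}$; by Chebyshev's inequality $|F|\geq|E|/2$, while $\sup_F|P|\leq(2A)^{1/p}$, so
\[
 \int_J|P|^p\,\d s\ \leq\ |J|\,\bigl(\sup_J|P|\bigr)^p\ \leq\ |J|\,\Bigl(\frac{8|J|}{|E|}\Bigr)^{pn}2A\ \leq\ \Bigl(\frac{C_4|J|}{|E|}\Bigr)^{pn+1}\int_E|P|^p\,\d s .
\]
Combining the three steps, absorbing $C_3,C_4$ into a single base constant, and using $\frac{C|J|}{|E|}\geq 1$ to lift the exponent from $pn+1$ up to $p\frac{\ln(M/m)}{\ln 2}+1$, gives \eqref{eq:ko}.

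The one delicate point is purely quantitative. Each of the three steps contributes a factor of the shape $2^{c\,\ln(M/m)/\ln 2}$ — from Harnack, from the oscillation of $\widetilde P$, and from the polynomial Remez inequality — and one must choose the radii in Jensen's formula and in Harnack's inequality economically, exploiting that $D_J$ is the full $4|J|$-neighbourhood of $J$ (and in fact a stadium, not a disc), so that the combined exponent stays at most $p\,\ln(M/m)/\ln 2$ and the base constant does not exceed $300$. This sharp bookkeeping is exactly what is carried out in \cite[Corollary, p.~3041]{Ko}, and the "straightforward adaptation'' is only the routine check that the argument goes through with an arbitrary interval $J$ in place of a fixed reference interval.
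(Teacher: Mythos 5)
The paper does not actually prove this lemma: it is quoted as a known result, with the proof delegated to Kovrijkine \cite[Corollary, p.~3041]{Ko} (see also \cite[Theorem 4.3]{GJ}). Your reconstruction follows exactly Kovrijkine's original argument --- Jensen's formula to bound the number of zeros near $J$ by $\ln(M/m)/\ln 2$, division by a finite Blaschke product together with Harnack's inequality applied to $\ln M-\ln|G|$ to reduce to a genuine polynomial of that degree, and the classical sup-norm Remez inequality upgraded to $L^p$ by the Chebyshev-set trick --- and each step is sound. Two small caveats. First, the sup-norm Remez inequality is usually stated for real polynomials; since $P(z)=\prod_j(z-z_j)$ has complex zeros, you should apply it to the real, nonnegative polynomial $|P(x)|^2=P(x)\overline{P}(x)$ of degree $2n$ (with $\overline{P}$ the polynomial with conjugated coefficients), which returns the same bound $\bigl(4|J|/|F|\bigr)^{n}$ for $\sup_J|P|$. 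Second, as you yourself acknowledge, with the radii $\frac32|J|\subset 3|J|$ your bookkeeping produces a base constant well above $300$: Harnack with ratio $\frac{3/2+1}{3/2-1}=5$ contributes $M^{5}$, the Blaschke denominators contribute $5^{n}$, and the $L^p$ Remez step a further factor of order $16$, so the argument as written proves the inequality only with $300$ replaced by a larger absolute constant. For the purposes of this paper any absolute constant suffices (it is absorbed into $C_1(\Theta,\alpha,p,\varepsilon)$ in \eqref{finalestimate}), but the specific value $300$ is not established by your write-up and does require the sharper choices of radii carried out in \cite{Ko}.
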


\subsection{Step 4 --- Conclusion}

\medskip

It remains to apply Lemma \ref{lem:ko} with $\Phi=f$, $J=J_n$ a good interval, 
$E=\Gamma\cap J_n$. We write $M=\max_{y\in D_{J_n}}|f(y)|$ and $m=\max_{x\in J_n}|f(x)|$.

First, note that if $x\in J_n$ is $\kappa$-good then, by assumption \eqref{est:N} on $N$,
\begin{equation}
\label{eq:jndx}
 \frac{|J_n|}{d_\varepsilon(x)} \le \frac{\alpha }{N}\le \frac{\varepsilon}{40\times 8^{1/p}}.
\end{equation}
Further, for such an $x$, and a $y$ with $|x-y|<10|J_n|$, we get
\begin{eqnarray*}
|f(y)|&\leq&\sum_{k\geq0}\frac{|f^{(k)}(x)|}{k!}|x-y|^k\\
&\leq& (2\kappa  {e^{\widetilde{C} \frac{a^2 }{\gamma }}} )^{1/p}
\sum_{k\geq0} 8^{k/p}\left(\frac{4|x-y|}{\varepsilon d_\varepsilon(x)}\right)^k
\left(\frac{1}{|J_n|}\int_{J_n}|f(x)|^p\,\mbox{d}x\right)^{1/p}\\
&=&(2\kappa  {e^{\widetilde{C} \frac{a^2 }{\gamma }}} )^{1/p}
\sum_{k\geq0} \left(\frac{|x-y|}{10|J_n|}\right)^k\left(\frac{40\times 8^{1/p}|J_n|}{\varepsilon d_\varepsilon(x)}\right)^k
\left(\frac{1}{|J_n|}\int_{J_n}|f(x)|^p\,\mbox{d}x\right)^{1/p}\\
&\leq&(2\kappa  {e^{\widetilde{C} \frac{a^2 }{\gamma }}} )^{1/p}
\sum_{k\geq0} \left(\frac{|x-y|}{10|J_n|}\right)^k
\sup_{J_n}|f(x)|^p
\end{eqnarray*}
with \eqref{eq:jndx}. Summing this last series, we obtain
\begin{equation}
\label{eq:estfy}
|f(y)|\leq\frac{(2\kappa  {e^{\widetilde{C} \frac{a^2 }{\gamma }}} )^{1/p}}{1-\frac{|x-y|}{10|J_n|}}\max_{J_n}|f(x)|.
\end{equation}
But now, from Claim 2, we know that the set $G_n$ of $\kappa$-good points has measure $|G_n|\geq\left(1-\frac{1}{\kappa}\right)|J_n|$.
Thus, if $y\in D_{J_n}$, there exists $z\in J_n$ such that $|z-y|\leq 4|J_n|$, and there exists $x\in G_n$ such that
$|z-x|\leq \dst\frac{1}{2\kappa}|J_n|$ which yields $|y-x|\leq \dst\left(4+\frac{1}{2\kappa}\right)|J_n|$. 
We can now chose $\kappa$ to be such that
$\dst \frac{\kappa^{1/p}}{1-\frac{1}{10}\left(4+\frac{1}{2\kappa}\right)}\leq 2$
(which is possible since the left hand side goes to $20/11<2$ when $\kappa\to 1$).
Then, \eqref{eq:estfy} implies
\begin{eqnarray}\label{maxestimate}
\max_{y\in D_{J_n}}|f(y)|\le 2\times (2  {e^{\widetilde{C} \frac{a^2 }{\gamma }}} )^{1/p}\max_{x\in J_n}|f(x)|,
\end{eqnarray} 
from which we obtain
$M/m\le 2\times (2 {e^{\widetilde{C} \frac{a^2 }{\gamma }}} )^{1/p}$. Since $|\Gamma\cap J_n|\geq\frac{\gamma}{2}|J_n|$, 
Kovrijkine's Remez Type Inequality then reads
\begin{eqnarray}\label{finalestimate}
\int_{J_n}|f(x)|^p\d x
\leq \left(\frac{C_1}{\gamma}  \right)^{C_2\frac{a^2}{\gamma }}\int_{J_n\cap \Gamma}|f(x)|^p\d x
\end{eqnarray}
where $C_1=C_1(\Theta,\alpha,p,\varepsilon)$.  Summing over all good intervals gives the result. 
\qed
\bigskip

We finish this section commenting on the proof of Corollary \ref{Cora=1}. We first observe that in view
of Corollary \ref{casea=1} the constant $e^{\tilde{C}a^2/\gamma}$ appearing in \eqref{condLS1bis}
turns out to be $e^{\tilde{C}}$, with $\tilde{C}>C$ where $C$ is the constant in \eqref{normequiv1}.
With this in mind, and following the lines of the proof above we see that reaching  
\eqref{maxestimate} the constant does not depend on $\gamma$, so that finally the exponent
in \eqref{finalestimate} is just a constant as required \qed

\subsection*{Acknowledgments}
The authors would like to thank Anton Baranov for several fruitful discussions in connection with
the results of this paper.

\end{document}